\numberwithin{equation}{section} \hyphenation{semi-stable}
\newtheorem{theorem}{Theorem}[section] 
\newtheorem{lemma}[theorem]{Lemma}
\newtheorem{proposition}[theorem]{Proposition}
\newtheorem{corollary}[theorem]{Corollary}
\theoremstyle{definition}
\newtheorem{definition}[theorem]{Definition}
\newtheorem{remark}[theorem]{Remark}
\newtheorem{example}[theorem]{Example}
\newtheorem{notation}[theorem]{Notation}
\newcommand{\popo}{\mathbb{P}^1 \times \mathbb{P}^1}
\newcommand{\N}{\mathbb{N}}
\begin{document}


\title[The interpolation problem for a set of three fat points in $\popo$]{The interpolation problem for a set of three fat points in $\popo$}

\author{Giuseppe Favacchio}
\address{Dipartimento di Matematica e Informatica\\
Viale A. Doria, 6 - 95100 - Catania, Italy} \email{favacchio@dmi.unict.it} \urladdr{/}

\author{Elena Guardo}
\address{Dipartimento di Matematica e Informatica\\
Viale A. Doria, 6 - 95100 - Catania, Italy}
\email{guardo@dmi.unict.it}
\urladdr{http://www.dmi.unict.it/$\sim$guardo/}

\keywords{interpolation problem; multiprojective spaces; Hilbert functions; fat points}
\subjclass[2000]{13F20, 13A15, 13D40, 14M05}
\thanks{ January 13, 2017}

\begin{abstract}This is an appendix to the recent paper of Favacchio and Guardo. In  these notes we describe explicitly a minimal bigraded free resolution and the bigraded Hilbert function of a set of 3 fat points whose support is an almost complete intersection (ACI) in $\popo.$   This solve the interpolation problem for three points with an ACI support.
\end{abstract}

\maketitle


\renewcommand{\thetheorem}{\thesection.\arabic{theorem}}
\setcounter{theorem}{0}
\section{Introduction}
The interpolation problem is an interesting problem which involve many different fields of research included algebraic geometry and commutative algebra. It asks to determine the dimension of the vector space of homogeneous polynomials of each given degree that vanish on a set of points under some condition.  
Let $R := k[ \mathbb{P}^n] = k[x_0,\ldots,x_n ]$ be the standard polynomial ring over a infinite field, consider the homogeneous ideal $$I_{Z} := \bigcap_{i=1}^{s}I_{P_i}^{m_i}$$  of $R,$ where $I_{P_i}$ is the homogeneous ideal associated to $P_i$ and $m_i$ are non negative integers.  The interpolation problem then asks what can be the Hilbert function of $R/I_{Z},$
where the Hilbert function $$H_Z(t) := dim_k (R/I_{Z})_t,\ \text{ for all}\ t>0$$ computes the dimension of the
homogeneous component $(R/I_{Z})_t$ of degree $t$ of $R/I_{Z}$ for all $t \in \mathbb{N} $. 
Geramita-Maroscia-Roberts \cite{GMR1983} and Geramita-Gregory-Roberts \cite{GGR1986} give an answer for set of reduced points, i.e. $m_i = 1$ for all $i$. 
When some of the multiplicities are greater than one, the interpolation problem remains open.


A variation of the interpolation problem was introduced by Giuffrida-Maggioni-Ragusa \cite{GMR1992} changing the ambient space from a single projective space $\mathbb{P}^n$ to $\mathbb{P}^{n_1}\times \cdots\times\mathbb{P}^{n_r}$ a multiprojective space. 
One of the main difference is that the coordinate ring of a collection of points in a multiprojective space is not always Cohen-Macaulay.
The multigraded version of the interpolation problem is still open for reduced set of points in $\popo.$ 
A partial answer was given in $\popo,$ was given by Giuffrida-Maggioni-Ragusa \cite{GMR1992}, for arithmetically Cohen-Macaulay reduced sets of points, and by Guardo-Van Tuyl \cite{GuVTbook} for arithmetically Cohen-Macaulay  sets of fat points. Recently, the first author (see \cite{F}) gave a description of the Hilbert functions for bigraded algebras in $k[\popo].$ 

Without the hypothesis of Cohen-Macaulayness, we are still far away from a complete understanding of the problem and any result in this direction could be interesting. In \cite{FGu} the authors recursively computed the minimal free resolution of a (non homogeneous) set $Z$ of fat points whose support is an almost complete intersection (ACI), that in general are not arithmetically Cohen-Macaulay(ACM) even if the support is ACM.   

The aim of this work is to describe the Hilbert function, and the bigraded Betti numbers, for a set $Z$ of three fat points in $\popo$ on an ACI support $X$ i.e. $I_X$ is generated by three bihomogeneous forms. In Section \ref{sec:recall} we recall some results and notation from \cite{FGu}.  In Section \ref{sec:numerical} we introduce and study a numerical function which will be related with the homological invariant of such sets of points.
Finally, in Section \ref{sec:Betti} first we give a formula to compute the graded Betti numbers and the Hilbert Function of a set of three fat points on an ACI support, then we give an answer to the interpolation problem for those set of points.

\section{Notation and preliminary result}\label{sec:recall}
Throughout this paper $R:=k[x_0,x_1,x_2,x_3]$ is the coordinate ring of $\popo$ over an infinite field of characteristic $0,$ with the bigrading given by $\deg x_0 =\deg x_1 = (1,0)$ and $\deg x_2 = \deg x_3 = (0,1)$.
Let $H_i$ be horizontal lines of type $(1,0)$ and $V_j$ vertical lines of type $(0,1)$, then  points in $\popo$ can be denoted by $P_{ij}:=H_i\times V_j.$ With an abuse of notation we denote the ideal $I_{P_{ij}}=(H_i,V_j).$ 

Given a set of distinct points $X$ and positive integers $m_{ij}$, we call $Z=\sum_{P_{ij}\in X} m_{ij}P_{ij}$ a set of fat points supported at $X$. The associated ideal to $Z$ is $I_Z:=\bigcap_{P_{ij}\in X}I_{P_{ij}}^{m_{ij}}.$

In these note we use the following notation as in \cite{FGu}. 
\begin{notation}\label{notation} We set $Z:=m_{11}P_{11}+m_{12}P_{12}+m_{21}P_{21},$ where $m_{ij}\ge 0$ and, without loss of generality, $m_{12}\ge m_{21}.$ We denote by $Z_1=(m_{11}-1)_+P_{11}+m_{12}P_{12}+(m_{21}-1)_+P_{21},$ where $(n)_+:=\max\{n,0\}.$
\end{notation} 

The following results were proven in \cite{FGu} in a more general setting. For the convenience of reader, we recall them in a version which is useful to our focus.   
\begin{lemma}[Lemma 2.2, \cite{FGu}]\label{L2.2}Let $Z=m_{11}P_{11}+m_{12}P_{12}$
	be a set of two collinear fat points. Set $M:=\max\{m_{11},m_{12}\}$, then a minimal free resolution of $I_Z$ is
	{\Small $$0\to \bigoplus_{t=1}^{M} R(-t, -(m_{11}-t+1)_+-(m_{12}-t+1)_+)  \to  \bigoplus_{t=0}^{M} R(t, -(m_{11}-t)_+-(m_{12}-t)_+) \to I_{Z} \to 0$$}
\end{lemma}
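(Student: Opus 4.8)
The plan is to reduce to an explicit monomial ideal and then read off its minimal free resolution from a Hilbert--Burch matrix. Since $P_{11}$ and $P_{12}$ are collinear, they share the horizontal line $H_1$, so after a bigraded linear change of coordinates (independent in each factor, hence a bigraded automorphism of $R$) I may assume $H_1 = x_0$, $V_1 = x_2$, $V_2 = x_3$. Then $I_{P_{11}} = (x_0, x_2)$ and $I_{P_{12}} = (x_0, x_3)$, so $I_Z = (x_0,x_2)^{m_{11}} \cap (x_0,x_3)^{m_{12}}$ is a monomial ideal, and a monomial $x_0^p x_1^q x_2^r x_3^s$ lies in $I_Z$ exactly when $p + r \ge m_{11}$ and $p + s \ge m_{12}$.

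First I would determine the minimal monomial generators. The exponent $q$ of $x_1$ is unconstrained, so a minimal generator has $q = 0$, and for a fixed $x_0$-exponent $t$ the smallest admissible exponents of $x_2,x_3$ are $(m_{11}-t)_+$ and $(m_{12}-t)_+$. This singles out $g_t := x_0^t x_2^{(m_{11}-t)_+} x_3^{(m_{12}-t)_+}$ for $t = 0,\ldots,M$. A short divisibility check shows the $g_t$ are pairwise incomparable and that every monomial of $I_Z$ is a multiple of some $g_t$, so $\{g_0,\ldots,g_M\}$ is the minimal generating set. Since $\deg g_t = (t, (m_{11}-t)_+ + (m_{12}-t)_+)$, this accounts for the middle free module $\bigoplus_{t=0}^M R(-t, -(m_{11}-t)_+ - (m_{12}-t)_+)$.

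Next I would exhibit the first syzygies. For consecutive generators one computes $\operatorname{lcm}(g_t, g_{t+1}) = x_0^{t+1} x_2^{(m_{11}-t)_+} x_3^{(m_{12}-t)_+}$ together with the relation $x_0\, g_t - x_2^{\epsilon} x_3^{\delta}\, g_{t+1} = 0$, where $\epsilon = \mathbf 1[t<m_{11}]$ and $\delta = \mathbf 1[t<m_{12}]$; these assemble into an $(M+1)\times M$ lower-bidiagonal matrix $\phi$ with $x_0$ on the diagonal and the monomials $-x_2^\epsilon x_3^\delta$ below it. The crux is to prove these $M$ relations generate all syzygies, and I would do this via the Hilbert--Burch theorem. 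The ideal $I_Z$ has codimension two in the Cohen--Macaulay ring $R$, so $\operatorname{grade} I_Z = 2$; and deleting the $j$-th row of $\phi$ leaves a block-triangular matrix (the two off-diagonal blocks vanish), whose determinant is the product of the diagonal $x_0$'s above row $j$ with the subdiagonal entries below it, namely $\pm\, x_0^{j} x_2^{(m_{11}-j)_+} x_3^{(m_{12}-j)_+} = \pm g_j$. Thus the signed maximal minors of $\phi$ recover the generators, and Hilbert--Burch yields the exact sequence $0 \to R^M \xrightarrow{\phi} R^{M+1} \to I_Z \to 0$.

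Finally, the syzygy associated to the pair $(g_t,g_{t+1})$ has degree $\deg\operatorname{lcm}(g_t,g_{t+1}) = (t+1, (m_{11}-t)_+ + (m_{12}-t)_+)$; reindexing by $t+1$ gives the first free module $\bigoplus_{t=1}^M R(-t, -(m_{11}-t+1)_+ - (m_{12}-t+1)_+)$, as claimed. Minimality is immediate: every entry of $\phi$ lies in the maximal ideal, since $x_0$ is nonconstant and each subdiagonal entry is a genuine monomial because for $0 \le t \le M-1$ at least one of $t<m_{11}$, $t<m_{12}$ holds. The only real obstacle is the minor computation feeding Hilbert--Burch, and I expect the block-triangular structure of $\phi$ after deleting a row to make that step clean; the rest is bookkeeping on exponents.
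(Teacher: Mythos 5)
Your proposal is correct, and it is worth noting that this paper contains no proof to match: Lemma \ref{L2.2} is recalled verbatim from \cite{FGu}, where it arises as the base case of a recursive mapping-cone machinery for fat ACIs, whereas you give a direct, self-contained argument. Your route --- reduce by a bigraded change of coordinates to the monomial ideal $(x_0,x_2)^{m_{11}}\cap(x_0,x_3)^{m_{12}}$, identify the minimal generators $g_t=x_0^t x_2^{(m_{11}-t)_+}x_3^{(m_{12}-t)_+}$, and certify the bidiagonal syzygy matrix $\phi$ via the Hilbert--Burch converse --- checks out at every step: the membership criterion $p+r\ge m_{11}$, $p+s\ge m_{12}$ is right; the $g_t$ are pairwise incomparable (divisibility $g_t \mid g_{t'}$ for $t<t'$ would force $t\ge M$) and dominate every monomial of $I_Z$ (take $t=\min\{p,M\}$); deleting row $j$ of $\phi$ indeed leaves a block-diagonal matrix whose determinant is $\pm x_0^{\,j}x_2^{(m_{11}-j)_+}x_3^{(m_{12}-j)_+}=\pm g_j$, since exactly $(m_{11}-j)_+$ of the subdiagonal entries below row $j$ carry an $x_2$ and $(m_{12}-j)_+$ carry an $x_3$; and $\operatorname{grade} I_Z=2$ is immediate from the regular sequence $x_0^M=g_M$, $x_2^{m_{11}}x_3^{m_{12}}=g_0$. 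The degree bookkeeping after reindexing $t\mapsto t+1$ reproduces the shifts $(-t,-(m_{11}-t+1)_+-(m_{12}-t+1)_+)$, and your minimality check (every entry of $\phi$ is a nonunit because $t<M$ forces $\epsilon_t+\delta_t\ge 1$) silently also corrects the obvious sign typo in the paper's displayed middle term, which should read $R(-t,\cdot)$ rather than $R(t,\cdot)$. What your approach buys is explicitness and independence: one sees the Hilbert--Burch matrix concretely, and the lemma no longer rests on the general recursion of \cite{FGu}; what the recursive approach of \cite{FGu} buys is uniformity, since the same mechanism (Lemma \ref{R2.10+T2.12}) then handles the genuinely non-ACM three-point cases where no Hilbert--Burch description is available. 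The only cosmetic caveat is the degenerate case $m_{11}=m_{12}=0$ (where $I_Z=R$), which the statement implicitly excludes.
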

\begin{lemma}[Lemma 3.4, \cite{FGu}]\label{L3.4}Let $Z:= m_{12}P_{12}+ m_{21}P_{21}$ be a set of two non collinear fat points. Then a minimal free resolution of $I_{Z}$ is
	$$0\to \bigoplus_{(a,b,c,d)\in \mathcal{D}_2} R(-a-b, -c-d)\to \bigoplus_{(a,b,c,d)\in \mathcal{D}_1} R(-a-b, -c-d) \to $$
	$$ \to \bigoplus_{(a,b,c,d)\in \mathcal{D}_0} R(-a-b, -c-d) \to I_{Z} \to 0$$
	where:\\
	$\mathcal{D}_0:=\{(a,b,c,d) |\ 0\le a,d \le m_{12},\ 0\le b,c \le m_{21},\ a+d=m_{12},\ b+c=m_{21}  \} $\\
	$\mathcal{D}_1:=\{(a,b,c,d) |\ 0\le a,d \le m_{12},\ 0\le b,c \le m_{21},\ (a+d=m_{12}+1,\ b+c=m_{21})\vee( a+d=m_{12},\ b+c=m_{21}+1 )\}$\\
	$\mathcal{D}_2:= \{(a,b,c,d) |\ 0\le a,d \le m_{12},\ 0\le b,c \le m_{21},\ a+d=m_{12}+1,\ b+c=m_{21}+1  \}.$
\end{lemma}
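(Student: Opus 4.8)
The plan is to reduce to an explicit pair of ideals in disjoint sets of variables and then build the resolution as a tensor product of two short Koszul-type resolutions. First I would exploit that $P_{12}$ and $P_{21}$ are non collinear: this means $H_1 \neq H_2$ and $V_1 \neq V_2$, so $H_1, H_2$ form a basis of the space of $(1,0)$-forms and $V_1, V_2$ a basis of the $(0,1)$-forms. After a bigraded change of coordinates I may assume $H_1 = x_0$, $H_2 = x_1$, $V_1 = x_2$, $V_2 = x_3$, so that $I_{P_{12}} = (x_0, x_3)$ and $I_{P_{21}} = (x_1, x_2)$, and hence $I_Z = (x_0,x_3)^{m_{12}} \cap (x_1,x_2)^{m_{21}}$. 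Writing $I' := (x_0,x_3)^{m_{12}}$ and $J' := (x_1,x_2)^{m_{21}}$, the crucial observation is that $I'$ and $J'$ involve disjoint variables; from this I would deduce that the two ideals are Tor-independent over $R$ and, via the standard isomorphism $(I'\cap J')/I'J' \cong \mathrm{Tor}_1^R(R/I',R/J')$, that the intersection coincides with the product, i.e. $I_Z = I' \cap J' = I'J'$.

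The second step is to record the minimal free resolution of each factor. The power $(x_0,x_3)^{m_{12}}$ is minimally generated by the $m_{12}+1$ monomials $x_0^{a}x_3^{d}$ with $a+d = m_{12}$, and its only syzygies are the $m_{12}$ Koszul-type relations $x_3 \cdot x_0^{a}x_3^{d} - x_0 \cdot x_0^{a-1}x_3^{d+1}$; thus $0 \to R^{m_{12}} \to R^{m_{12}+1} \to I' \to 0$, with generators in bidegrees $(a,d)$, $a+d=m_{12}$, and syzygies in bidegrees $(a,d)$, $a+d = m_{12}+1$, $1 \le a \le m_{12}$. The analogous resolution holds for $J'$, with bidegrees recorded in $(b,c)$.

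The third and main step is to pass to the tensor product of these two resolutions. Because $I'$ and $J'$ live in disjoint variables, $\mathrm{Tor}_i^R(R/I', R/J') = 0$ for $i>0$, so $\mathrm{Tor}_i^R(I', J') = 0$ for $i>0$ and $I' \otimes_R J' \cong I'J' = I_Z$; consequently the tensor product of the two length-one resolutions above is a free resolution of $I_Z$. Since every differential of each factor has entries in the irrelevant ideal, the tensor product complex is again minimal. Reading off its terms gives exactly $R^{(m_{12}+1)(m_{21}+1)}$ in homological degree $0$, then $R^{m_{12}(m_{21}+1)+(m_{12}+1)m_{21}}$ in degree $1$, and $R^{m_{12}m_{21}}$ in degree $2$. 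Finally I would match the bidegrees: a generator $x_0^a x_3^d \cdot x_1^b x_2^c$ (with $a+d=m_{12}$, $b+c=m_{21}$) has bidegree $(a+b, c+d)$, giving the summands $R(-a-b,-c-d)$ indexed by $\mathcal{D}_0$; tensoring a syzygy of $I'$ (so $a+d = m_{12}+1$) with a generator of $J'$, or a generator of $I'$ with a syzygy of $J'$ (so $b+c=m_{21}+1$), produces the summands indexed by $\mathcal{D}_1$; and tensoring the two syzygy modules ($a+d=m_{12}+1$ and $b+c=m_{21}+1$) produces those indexed by $\mathcal{D}_2$.

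I expect the delicate point to be the bookkeeping that identifies the tensor-product terms with the sets $\mathcal{D}_0, \mathcal{D}_1, \mathcal{D}_2$ exactly as written, together with checking that the ranges $0 \le a, d \le m_{12}$ and $0 \le b, c \le m_{21}$ simultaneously encode both generators and syzygies; the homological input (Tor-independence of disjoint-variable ideals, and minimality of the tensor product of minimal complexes) is standard and should be stated cleanly rather than belabored.
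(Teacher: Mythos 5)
Your proposal is correct, and the present paper offers no proof to compare it against: the lemma is recalled verbatim from \cite{FGu}, where this tensor-product structure is exactly what underlies the statement. Your argument---coordinates so that $I_Z=(x_0,x_3)^{m_{12}}\cap(x_1,x_2)^{m_{21}}$, Tor-independence of ideals in disjoint variables giving $I_Z=I'J'\cong I'\otimes_R J'$, and the tensor product of the two Hilbert--Burch resolutions $0\to R^{m}\to R^{m+1}\to(x_i,x_j)^{m}\to 0$, which stays minimal since all differential entries lie in the irrelevant ideal---reproduces precisely the summands indexed by $\mathcal{D}_0$, $\mathcal{D}_1$, $\mathcal{D}_2$ (note the bounds $0\le a,d\le m_{12}$, $0\le b,c\le m_{21}$ together with $a+d=m_{12}+1$ or $b+c=m_{21}+1$ automatically force the relevant exponents to be at least $1$, so the sets count generators and syzygies correctly), and is the standard route to this statement.
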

These two lemmas describe the resolution of $Z$ in the degenerating case when one of the multiplicities is $0.$
The next result allows to recursively compute the resolution of $Z$ in the remaining cases.  
\begin{lemma}[Remark 2.10,Theorem 2.12, \cite{FGu}]\label{R2.10+T2.12} Using Notation \ref{notation}, let $0\to
	L_2\to L_1\to L_0$ be a minimal free resolution of $I_{Z_1},$ 
	then a minimal free resolution for $I_{Z}$ is {\Small
		$$ 0\to \bigoplus_{(a,b)\in A_2(Z)}R(-a,-b)\oplus L_2(0,-1) \to$$
		$$\to  \bigoplus_{(a,b)\in A_1(Z)}R(-a,-b)^2
		\oplus R(-m_{11}-m_{21},-(m_{12}-m_{11})_+-1  )
		\oplus L_1(0,-1)\to $$
		\begin{equation}\label{res}\to \bigoplus_{(a,b)\in A_0(Z)}R(-a,-b)\oplus L_0(0,-1) \to I_{Z}\to 0 \end{equation}
	}
	where
	{\Small
		\begin{tabular}{l}
			$A_0(Z):=\{(a,b)\ |\ a+b= m_{11}+m_{21}+ (m_{12}-m_{11})_+\ \ \text{and}\ \ 0\le b\le (m_{12}-m_{11})_+  \}$\\
			$A_1(Z):=\{(a,b)\ |\ a+b=1+ m_{11}+m_{21}+ (m_{12}-m_{11})_+\ \text{and}\ 1 \le b\le (m_{12}-m_{11})_+ \}$\\
			$A_2(Z):=\{(a,b)\ |\ a+b=2+ m_{11}+m_{21}+ (m_{12}-m_{11})_+\ \text{and}\ 2 \le b\le (m_{12}-m_{11})_+ +1 \}.$\\
		\end{tabular}}
\end{lemma}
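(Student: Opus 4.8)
The plan is to realize the passage from $Z_1$ to $Z$ as a single residuation with respect to the vertical line $V_1$, and then to assemble the resolution of $I_Z$ by a mapping cone (horseshoe) argument. The form $V_1$ has degree $(0,1)$ and vanishes at $P_{11}$ and $P_{21}$ but not at $P_{12}$, and it is exactly these two multiplicities that drop by one in passing from $Z$ to $Z_1$. First I would establish the short exact sequence
\[
0 \to I_{Z_1}(0,-1) \xrightarrow{\ \cdot V_1\ } I_Z \to \mathcal{C} \to 0 ,
\]
where $\mathcal{C} := (I_Z + (V_1))/(V_1)$ is the image of $I_Z$ in $R/(V_1)$. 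Exactness amounts to the colon computation $I_Z : V_1 = I_{Z_1}$, which I would verify primary component by primary component: $(H_1,V_1)^{m_{11}} : V_1 = (H_1,V_1)^{(m_{11}-1)_+}$ and $(H_2,V_1)^{m_{21}} : V_1 = (H_2,V_1)^{(m_{21}-1)_+}$, while $V_1$ is a nonzerodivisor modulo $(H_1,V_2)^{m_{12}}$, so that component is unchanged. Intersecting gives precisely $I_{Z_1}$.

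The second step is to identify $\mathcal{C}$ explicitly as an ideal of $R/(V_1) \cong k[x_0,x_1,x_3]$. Reducing each primary component modulo $V_1$, using $V_2 \equiv x_3$ up to scalar and $H_2$ coprime to $(H_1,x_3)$, I expect to obtain
\[
\mathcal{C} = H_1^{m_{11}} H_2^{m_{21}} \cdot (H_1,x_3)^{(m_{12}-m_{11})_+} ,
\]
the vanishing condition coming from $P_{12}$ surviving as the complete intersection power $(H_1,x_3)^{(m_{12}-m_{11})_+}$; the surjectivity half I would check by lifting the listed generators to honest elements of $I_Z$, or equivalently by comparing Hilbert functions through the sequence above. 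Since $(H_1,x_3)$ is a regular sequence, the power $(H_1,x_3)^{d}$ with $d := (m_{12}-m_{11})_+$ has the length-one resolution built from the $d+1$ generators $g_i = H_1^{d-i}x_3^{i}$ and their $d$ Koszul-type syzygies $x_3 g_i - H_1 g_{i+1}$. Twisting by the degree $(m_{11}+m_{21},0)$ of $H_1^{m_{11}}H_2^{m_{21}}$ places the generators in the bidegrees recorded by $A_0(Z)$ and the syzygies in those of $A_1(Z)$.

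Next I would pass from this $R/(V_1)$-resolution of $\mathcal{C}$ to an $R$-resolution by change of rings, i.e. by tensoring with the Koszul complex $0 \to R(0,-1) \xrightarrow{V_1} R \to R/(V_1) \to 0$. This lengthens the resolution by one and produces exactly the claimed ``$\mathcal{C}$-part'': writing $\tilde F_0 = \bigoplus_{A_0}R(-a,-b)$ and $\tilde F_1$ for the lifts of the two free modules above, one gets $\tilde F_0$ in homological degree $0$; the superposition $\tilde F_0(0,-1)\oplus \tilde F_1$ in degree $1$, whose overlap in the range $1 \le b \le d$ yields the doubled summands $\bigoplus_{A_1} R(-a,-b)^2$ and whose extremal term $b = d+1$ yields the isolated generator $R(-m_{11}-m_{21},\,-(m_{12}-m_{11})_+-1)$; and $\tilde F_1(0,-1)$ in degree $2$, landing in the bidegrees of $A_2(Z)$. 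Feeding this, together with the given minimal resolution $L_\bullet(0,-1)$ of $I_{Z_1}(0,-1)$, into the horseshoe lemma applied to the short exact sequence then yields a free resolution of $I_Z$ of the exact shape \eqref{res}.

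The hard part will be minimality. The horseshoe construction a priori only guarantees a possibly non-minimal resolution, so I must rule out any unit entry both in the comparison map lifting the boundary homomorphism and within the two already-minimal blocks. I would argue this by bidegree bookkeeping: the minimality of $L_\bullet$ is available by induction, the $\mathcal{C}$-block is minimal because $V_1 \in \mathfrak{m}$ keeps the Koszul change-of-rings maps inside $\mathfrak{m}$, and the cross terms can only create cancellation where a twist of $L_i(0,-1)$ coincides with a twist of the $\mathcal{C}$-part. Checking that at each homological degree such coincidences either do not occur or are not realized by a scalar completes the proof. A clean way to close this is to confirm that the alternating sum of the listed ranks reproduces the Hilbert function forced by the short exact sequence, which certifies that no further cancellation is possible.
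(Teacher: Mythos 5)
Your construction of the free resolution is sound, and it is worth noting that the paper you are working from offers no proof at all for this lemma: it is quoted verbatim from \cite{FGu} (Remark 2.10, Theorem 2.12), whose recursive method is in the same spirit as your residuation argument. Concretely, your colon computation $I_Z : V_1 = I_{Z_1}$ is correct (componentwise, as you indicate), the identification of the cokernel $\mathcal{C}\cong H_1^{m_{11}}H_2^{m_{21}}(H_1,V_2)^{d}$ with $d=(m_{12}-m_{11})_+$ in $R/(V_1)$ works --- the monomials $H_1^{m_{11}+d-i}H_2^{m_{21}}V_2^{i}$, $0\le i\le d$, do lie in $I_Z$ and lift the generators, while the image is trapped in the intersection of the images of the three primary components --- and the Koszul change of rings produces exactly the shifts $A_0(Z)$, the doubled $A_1(Z)$, the isolated summand $R(-m_{11}-m_{21},-(m_{12}-m_{11})_+-1)$, and $A_2(Z)$. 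So the horseshoe does produce \emph{a} free resolution of the displayed shape.

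The genuine gap is minimality, and your proposed way of closing it is not merely incomplete but invalid: comparing the alternating sum of the listed ranks with the Hilbert function can never certify minimality, because the bigraded Euler characteristic $\sum_u(-1)^u\beta_{u,(i,j)}$ is the same for \emph{every} bigraded free resolution of $I_Z$ --- cancelling a redundant summand $R(-a,-b)$ that appears in two consecutive homological degrees leaves it unchanged, and the Hilbert function sees only this alternating sum, i.e.\ it is blind to precisely the phenomenon you must exclude. The real content is the twist bookkeeping you announce but do not carry out: one must show the horseshoe comparison maps $\theta_i\colon T_i\to L_{i-1}(0,-1)$ (where $T_\bullet$ denotes the $\mathcal{C}$-part) admit no unit entries. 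When all shifts of $L_\bullet$ sit on single anti-diagonals (e.g.\ $m_{11}\le m_{21}$, where $L_{i-1}(0,-1)$ lies on $a+b=m_{12}+m_{21}+i-1$ while $T_i$ lies on $a+b=m_{12}+m_{21}+i$, forcing all entries of $\theta_i$ into positive degree) this is immediate; but in general it is not: for $m_{11}>m_{12}$ one has $d=0$ and $T_1=R(-m_{11}-m_{21},-1)$, so one must rule out a minimal generator of $I_{Z_1}$ in bidegree $(m_{11}+m_{21},0)$, and the shifts of $I_{Z_1}$ of type $D_1$ and $D_4$ do \emph{not} lie on the anti-diagonal, so no pure degree-count suffices. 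Excluding these coincidences requires the inductive description of the shifts of $L_\bullet$ --- that is, a simultaneous induction, down to the base cases Lemma \ref{L2.2} and Lemma \ref{L3.4}, carrying the shape of the minimal resolution along with its existence --- which your sketch defers entirely. Note also that the recursion implicitly requires $m_{11}+m_{21}\ge 1$: if $m_{11}=m_{21}=0$ then $Z_1=Z$ and the horseshoe output is visibly non-minimal, so your argument (like the lemma itself) must be confined to the genuinely recursive cases.
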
	 
In Section \ref{sec:Betti} we give a closed formula to compute the bigraded Betti numbers of $Z$.

\section{Numerical facts}\label{sec:numerical}
In order to determine the graded Betti numbers of $Z$ we need to introduce a numerical function depending on a parameter $t\in \mathbb{Z}.$ We define inductively the function $\varphi_t:\mathbb{Z}\to\mathbb{Z}$ as follows:

$$\varphi_1(n)=\begin{cases} 1 & \text{if}\ n = 0\\
                            0 & \text{otherwise}
                             \end{cases}$$

and, for $t> 1$

$$\varphi_t(n)=\begin{cases}\varphi_{t-1}(n) & \text{if}\ 0\le n < t-1\\
                           \varphi_{t-1}(n)+1 & \text{if}\ t-1\le n < 2t-1\\
                            0 & \text{otherwise.}
                            \end{cases}$$

We will use the convention $\varphi_t(n)=0$ if $t\le 0.$   
\begin{remark}One can inductively check that
$\varphi_t(n)=\left(\left\lfloor \frac{\min\{n,\ 2t-2-n\}}{2}
\right\rfloor+1\right)_+.$ But, for our purposes, we prefer the use of the recursive definition. 
\end{remark}
Let $t,d\in \mathbb{Z}$ be two integers such that $t\ge d,$ we
define the function $\varphi_{t,d}(n):\mathbb{Z}\to\mathbb{Z}$ in the following way:
     $$\varphi_{t,d}(n)=\varphi_{t}(n+d)-\varphi_{d}(n+d).$$

We give an example in order to clarify the notation.
\begin{example} To shorten the notation, we represent the functions as tuples, where the first entry is their value in $0$, the second entry is their value in $1$ and so on.

\begin{tabular}{cl}
$\varphi_{1}$&$=(1,0,0,\ldots)$\\
$\varphi_{2}$&$=(1,1,1,0,0,\ldots)$\\
$\varphi_{3}$&$=(1,1,2,1,1,0,\ldots)$\\
$\varphi_{4}$&$=(1,1,2,2,2,1,1,0,\ldots)$\\
$\cdots$&\\
$\varphi_{7}$&$=(1,1,2,2,3,3,4,3,3,2,2,1,1,0,\ldots)$\\
$\varphi_{7}-\varphi_{4}$&$=(0,0,0,0,1,2,3,3,3,2,2,1,1,0,\ldots)$\\
$\varphi_{7,4}$&$=(1,2,3,3,3,2,2,1,1,0,\ldots)$\\
$\varphi_{4,-3}$&$=(0,0,0,1,1,2,2,2,1,1,0,\ldots)$\\
\end{tabular}
\end{example}
We have the following property.
\begin{proposition}\label{indphi} Let $t\ge d$ be two integers, then:

$\varphi_{t,d+1}(n-1)=\begin{cases} \varphi_{t,d}(n)-1 & \text{if}\ 0\le n \le d\\
                                    \varphi_{t,d}(n) & \text{otherwise.}
                                    \end{cases}$

\end{proposition}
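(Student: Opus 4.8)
The plan is to eliminate the dependence on $t$ first, reducing the claim to a statement purely about the one-parameter family $\varphi_t$. Unwinding the definition $\varphi_{t,d}(n)=\varphi_t(n+d)-\varphi_d(n+d)$, I compute
$$\varphi_{t,d+1}(n-1)=\varphi_t\big((n-1)+(d+1)\big)-\varphi_{d+1}\big((n-1)+(d+1)\big)=\varphi_t(n+d)-\varphi_{d+1}(n+d),$$
while $\varphi_{t,d}(n)=\varphi_t(n+d)-\varphi_d(n+d)$ by definition. Subtracting, the term $\varphi_t(n+d)$ cancels and I am left with
$$\varphi_{t,d+1}(n-1)-\varphi_{t,d}(n)=\varphi_d(n+d)-\varphi_{d+1}(n+d).$$
Thus, after the substitution $m=n+d$ (under which $0\le n\le d$ translates exactly into $d\le m\le 2d$), the proposition is equivalent to the $t$-free identity
$$\varphi_{d+1}(m)-\varphi_d(m)=\begin{cases}1 & d\le m\le 2d\\ 0 & \text{otherwise.}\end{cases}$$

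The main work is then to establish this last identity, which I would prove directly from the recursive definition of $\varphi_{d+1}$ in terms of $\varphi_d$. For $d\ge 1$ the recursion splits $\mathbb{Z}$ into three ranges: on $0\le m<d$ one has $\varphi_{d+1}(m)=\varphi_d(m)$, so the difference is $0$; on $d\le m<2(d+1)-1=2d+1$ one has $\varphi_{d+1}(m)=\varphi_d(m)+1$, so the difference is $1$; and elsewhere both functions vanish, giving $0$. The only point requiring care is the overlap region $m\in\{2d-1,\,2d\}$, where $\varphi_d(m)=0$ because $m\ge 2d-1$ already forces $\varphi_d$ out of its support, yet $\varphi_{d+1}(m)=\varphi_d(m)+1=1$ since $m<2d+1$. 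This confirms the difference is $1$ throughout $d\le m\le 2d$ and drops to $0$ immediately afterwards.

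Finally I would dispose of the degenerate values of $d$ using the convention $\varphi_t\equiv 0$ for $t\le 0$. For $d\le -1$ both $\varphi_d$ and $\varphi_{d+1}$ vanish identically and the interval $d\le m\le 2d$ is empty, so both sides of the identity are $0$; for $d=0$ one has $\varphi_0\equiv 0$ and $\varphi_1(m)=1$ exactly when $m=0$, matching the single-point interval $0\le m\le 0$. I expect the only genuine obstacle to be bookkeeping: keeping the half-open intervals $[t-1,\,2t-1)$ coming from the recursion aligned with the closed interval $[d,2d]$ appearing in the statement, and verifying that the supports of $\varphi_d$ and $\varphi_{d+1}$ overlap precisely in the way needed for the ``$+1$'' to survive as the difference on the full range.
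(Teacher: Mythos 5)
Your proof is correct and takes essentially the same route as the paper's: both unwind $\varphi_{t,d+1}(n-1)=\varphi_t(n+d)-\varphi_{d+1}(n+d)$, expand $\varphi_{d+1}$ via its recursive definition in terms of $\varphi_d$, and re-index the ranges, your cancellation of $\varphi_t(n+d)$ being just a repackaging of this as the $t$-free identity $\varphi_{d+1}(m)-\varphi_d(m)=1$ for $d\le m\le 2d$ and $0$ otherwise. If anything you are more careful than the paper, which silently uses that $\varphi_d(n+d)=0$ on the outer range and does not separately discuss the degenerate cases $d\le 0$.
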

\begin{proof} By definition, we have
$\varphi_{t,d+1}(n-1)= \varphi_{t}(n+d)-\varphi_{d+1}(n+d),$ hence

$\varphi_{t,d+1}(n-1)=\begin{cases}\varphi_{t}(n+d)-\varphi_{d}(n+d) & \text{if}\ 0\le n+d < d\\
                                    \varphi_{t}(n+d)-\varphi_{d}(n+d)-1 & \text{if}\ d\le n+d < 2d+1\\
                                    \varphi_{t}(n+d) -0 & \text{otherwise}
                                    \end{cases}$

      \small$=\begin{cases}         \varphi_{t}(n+d)-   \varphi_{d}(n+d) & \text{if}\ 0\le n+d < d\\
                                    \varphi_{t}(n+d)-       \varphi_{d}(n+d)-1 & \text{if}\ d\le n+d < 2d+1\\
                                    \varphi_{t}(n+d) -0 & \text{otherwise}
                                    \end{cases}$

           $=\begin{cases}          \varphi_{t}(n+d)-       \varphi_{d}(n+d)-1 & \text{if}\ 0\le n \le d\\
                                    \varphi_{t}(n+d)-\varphi_{d}(n+d) & \text{otherwise}
                                    \end{cases}.$
\end{proof}

\section{The graded Betti numbers of $I_Z$}\label{sec:Betti}
Let $X$ be a set of fat points in $\popo$ and let $I_X\subseteq R := k[\popo]$ be the bihomogeneous ideal associated to $X.$   Then we can associate to $I_X$ a minimal bigraded free resolution of the form
$$0\to \bigoplus R(-i,-j)^{\beta_{2,(i,j)}(X)}\to \bigoplus  R(-i,-j)^{\beta_{1,(i,j)}(X)}\to \bigoplus  R(-i,-j)^{\beta_{0,(i,j)}(X)}\to R\to R/I_X \to 0 $$
where $R(-i,−j)$ is the free $R$-module obtained by shifting the degrees of $R$
by $(i,j).$ The graded Betti number $\beta_{u,(i,j)}(X)$ of $R/I_X$ counts the number of a
minimal set of generators of degree $(i,j)$ in the $u$-th syzygy module of $R/I_X$.

Using the same strategy as in \cite{FGu} we split the description in two cases.

\subsection{First case $m_{11}\le m_{21}$}

\begin{theorem}\label{Thmcaso1} With the Notation \ref{notation}, if $m_{11}\le m_{21}$ then the bigraded Betti numbers of $I_Z$ are:
\Small\begin{tabular}{l} $\beta_{0,(a,b)}(Z)$=$\begin{cases}
                    \left(\min\{a,b-m_{11}, m_{21}-m_{11}\}+1\right)_+ +\varphi_{m_{12},m_{12}-m_{11}}(b)&\text{if}\ a+b=m_{21}+m_{12}\\
                    0 & \text{otherwise}
                    \end{cases}$\\

\\
$\beta_{1,(a,b)}(Z)$=$\left( \beta_{0,(a,b-1)}(Z) +\beta_{0,(a-1,b)}(Z) -1 \right)_+$\\
$\beta_{2,(a,b)}(Z)$=$\left( \beta_{0,(a-1,b-1)}(Z) -1 \right)_+$
\end{tabular}

\end{theorem}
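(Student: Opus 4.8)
The plan is to argue by induction on $m_{11}$, the smallest of the three multiplicities in the present case, using Lemma \ref{R2.10+T2.12} as the inductive engine. Observe first that since $m_{11}\le m_{21}\le m_{12}$ we have $(m_{12}-m_{11})_+=m_{12}-m_{11}$ throughout, so the truncations defining $A_0(Z),A_1(Z),A_2(Z)$ simplify, and passing from $Z$ to $Z_1$ replaces the triple $(m_{11},m_{12},m_{21})$ by $(m_{11}-1,m_{12},m_{21}-1)$, which still satisfies the case hypothesis $m_{11}-1\le m_{21}-1$.

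\textbf{Base case ($m_{11}=0$).} Here the support degenerates to the two non-collinear points $m_{12}P_{12}+m_{21}P_{21}$, whose minimal free resolution is given by Lemma \ref{L3.4}. Moreover $\varphi_{m_{12},m_{12}-m_{11}}=\varphi_{m_{12},m_{12}}$ vanishes identically, since $\varphi_{t,d}(n)=\varphi_t(n+d)-\varphi_d(n+d)$ with $t=d=m_{12}$. Thus I must check that the number of quadruples of $\mathcal{D}_0$ producing a fixed bidegree $(a,b)$ (necessarily with $a+b=m_{12}+m_{21}$) equals $(\min\{a,b,m_{21}\}+1)_+$, and likewise that the fibres of $\mathcal{D}_1$ and $\mathcal{D}_2$ over each bidegree match the claimed $\beta_1$ and $\beta_2$. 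Solving $a+d=m_{12}$, $b+c=m_{21}$ under the stated bounds reduces each of these to counting integer points in an interval, a routine computation.

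\textbf{Inductive step for $\beta_0$.} Assuming the formulas for $Z_1$, the degree-$0$ part of the resolution in Lemma \ref{R2.10+T2.12} shows that the generators of $I_Z$ are those of $L_0(0,-1)$ together with one new generator for each element of $A_0(Z)$; minimality forbids cancellation, so
\begin{equation*}\beta_{0,(a,b)}(Z)=\beta_{0,(a,b-1)}(Z_1)+\varepsilon_0(a,b),\end{equation*}
where $\varepsilon_0(a,b)$ is $1$ when $a+b=m_{12}+m_{21}$ and $0\le b\le m_{12}-m_{11}$, and $0$ otherwise. By the inductive formula the $\min$-summand of $\beta_{0,(a,b-1)}(Z_1)$ is exactly $(\min\{a,b-m_{11},m_{21}-m_{11}\}+1)_+$, which is the non-$\varphi$ part of the claim, while its $\varphi$-part is $\varphi_{m_{12},m_{12}-m_{11}+1}(b-1)$. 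It therefore suffices to prove
\begin{equation*}\varphi_{m_{12},m_{12}-m_{11}}(b)=\varphi_{m_{12},m_{12}-m_{11}+1}(b-1)+\varepsilon_0(a,b),\end{equation*}
which is precisely Proposition \ref{indphi} with $t=m_{12}$ and $d=m_{12}-m_{11}$ (legitimate since $m_{11}\ge 1$ gives $t\ge d+1$).

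\textbf{Inductive step for $\beta_1,\beta_2$, and the main obstacle.} The degree-$1$ and degree-$2$ parts of Lemma \ref{R2.10+T2.12} give, again by minimality,
\begin{align*}\beta_{1,(a,b)}(Z)&=\beta_{1,(a,b-1)}(Z_1)+2\,\varepsilon_1(a,b)+\varepsilon_1'(a,b),\\ \beta_{2,(a,b)}(Z)&=\beta_{2,(a,b-1)}(Z_1)+\varepsilon_2(a,b),\end{align*}
where $\varepsilon_1,\varepsilon_2$ are the indicators of $A_1(Z),A_2(Z)$ and $\varepsilon_1'$ is the indicator of the single extra generator in bidegree $(m_{11}+m_{21},m_{12}-m_{11}+1)$. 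Substituting the inductive formulas for $\beta_1(Z_1),\beta_2(Z_1)$ and rewriting every $\beta_0(Z_1)$-value in terms of $\beta_0(Z)$ via the $\beta_0$-recursion above, both claimed identities reduce to elementary statements of the shape $(x+p-1)_+=(x-1)_++q$ on the relevant diagonal line. For $\beta_2$ this is a short check, the only subtlety being boundary indices (e.g. $b=1$, where the relevant $\beta_0(Z_1)$-value is forced to $0$ by a negative argument). The genuinely delicate point is $\beta_1$: the positive-part operator, the coefficient $2$ on $A_1(Z)$, and the lone extra syzygy prevent a clean split, and one must show that the locus where $\beta_{0,(a,b-1)}(Z)+\beta_{0,(a-1,b)}(Z)$ exceeds $1$ matches $A_1(Z)$ once the extra generator is accounted for. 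This is the combinatorial heart of the theorem and is where the unimodal shape of $\beta_0$ along $a+b=m_{12}+m_{21}$ must be exploited.
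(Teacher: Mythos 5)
Your proposal is correct and follows essentially the same route as the paper's own proof: induction on $m_{11}$ with Lemma \ref{L3.4} as the base case, the recursion of Lemma \ref{R2.10+T2.12} for the inductive step, and Proposition \ref{indphi} to absorb the $+1$ on $A_0(Z)$ into the shift from $\varphi_{m_{12},B+1}(b-1)$ to $\varphi_{m_{12},B}(b)$. Your treatment of $\beta_1$ and $\beta_2$ (rewriting the $Z_1$-Betti numbers via the $\beta_0$-recursion, with care for the positive parts and the lone extra syzygy in bidegree $(m_{11}+m_{21},m_{12}-m_{11}+1)$) is exactly the paper's computation, carried out at a comparable level of detail.
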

\begin{proof}We proceed by induction on $m_{11}.$ If $m_{11}=0$ then $\varphi_{m_{12},m_{12}}(b)=0$ and statement is true by Lemma \ref{L3.4}. Assume $m_{11}>0,$ by Lemma \ref{R2.10+T2.12} we get
$$\beta_{0,(a,b)}(Z)=\begin{cases} \beta_{0,(a,b-1)}({Z_1})+1 & \text{if}\ a+b=m_{12}+ m_{21} \text{and}\ b\le m_{12}-m_{11}   \\
                                \beta_{0,(a,b-1)}({Z_1}) & \text{if}\ a+b=m_{12}+ m_{21}  \text{and}\ b> m_{12}-m_{11}  \\
                                0 & \text{otherwise}
                                \end{cases}$$

Thus,  set $S:=m_{12}+ m_{21}$ and $B:=
m_{12}-m_{11},$ we have
{\Small$$ \beta_{0,(a,b)}(Z)=\begin{cases} \left(\min\{a,b-m_{11}, m_{21}-m_{11}\}+1\right)_+ +\varphi_{m_{12},B+1}(b-1)+1 & \text{if}\ a+b=S\  \text{and}\ b\le B  \\
 \left(\min\{a,b-m_{11}, m_{21}-m_{11}\}+1\right)_+ +\varphi_{m_{12},B+1}(b-1) & \text{if}\ a+b=S\  \text{and}\ b> B  \\
                                0 & \text{otherwise}
                                \end{cases}$$}
and by using Proposition \ref{indphi} it is
{\Small$$\beta_{0,(a,b)}(Z) =\begin{cases} \left(\min\{a,b-m_{11}, m_{21}-m_{11}\}+1\right)_+ +\varphi_{m_{12},B}(b) & \text{if}\ a+b=S\  \text{and}\ b\le B  \\
 \left(\min\{a,b-m_{11}, m_{21}-m_{11}\}+1\right)_+ +\varphi_{m_{12},B}(b) & \text{if}\ a+b=S\  \text{and}\ b> B  \\
                                0 & \text{otherwise.}
                                \end{cases}$$}

The computation of $\beta_{1,(a,b)}(Z)$ also follows by induction and Lemma \ref{R2.10+T2.12}.

\begin{tabular}{l}
	$\beta_{1,(a,b)}({Z})=\begin{cases}  \beta_{1,(a,b-1)}({Z_1})+2 & \text{if}\ a+b-1= m_{21}+ m_{12}\ \text{and}\ 1\le b\le m_{12}-m_{11} \\ 
	\beta_{1,(a,b)}({Z})+1 & \text{if}\ (a,b)= (m_{11}+m_{21}, m_{12}-m_{11}+1) \\ 
	0 &  \text{otherwise} 
	\end{cases}$\\
	$=\begin{cases}  \beta_{0,(a,b-2)}({Z_1})+\beta_{0,(a-1,b-1)}({Z_1})+1 & \text{if}\ a+b-1= m_{21}+ m_{12}\ \text{and}\ 1\le b\le m_{12}-m_{11} \\ 
	\beta_{0,(a,b-2)}({Z_1})+\beta_{0,(a-1,b-1)}({Z_1}) & \text{if}\ (a,b)= (m_{11}-m_{21}, m_{12}-m_{11}+1) \\ 
	0 &  \text{otherwise} 
	\end{cases}$\\
	$=\begin{cases}  \beta_{0,(a,b-1)}({Z})+\beta_{0,(a-1,b)}({Z})-1 & \text{if}\ a+b-1= m_{21}+ m_{12}\ \text{and}\ 1\le b\le m_{12}-m_{11} \\ 
	\beta_{0,(a,b-1)}({Z})+\beta_{0,(a-1,b)}({Z})-1 & \text{if}\ (a,b)= (m_{11}+m_{21}, m_{12}-m_{11}+1) \\ 
	0 &  \text{otherwise} 
	\end{cases}$\\
\end{tabular}

The computation of $\beta_{2,(a,b)}(Z)$ requires the same procedure as above  by using the inductive hypotheses and
Lemma \ref{R2.10+T2.12}.
\end{proof}

We show in the following example how to compute the graded Betti numbers of a set of three fat points using Theorem \ref{Thmcaso1}.

\begin{example}\label{exstart1}  Consider $Z=2P_{11}+5P_{12}+4P_{21},$ to compute $\beta_{0,(a,b)}(Z)$ we first need to compute the bigraded Betti numbers of  $Z':=(2-2)P_{11}+5P_{12}+(4-2)P_{21}=5P_{12}+2P_{21}.$ By Lemma \ref{L3.4}, the non zero bigraded Betti numbers of $R/I_{Z'}$ are:

\begin{tabular}{lll}
$\beta_{0,(7,0)}(I_{Z'})=1$&$\beta_{1,(7,1)}(I_{Z'})=2$&$\beta_{2,(7,2)}(I_{Z'})=1$\\
$\beta_{0,(6,1)}(I_{Z'})=2$&$\beta_{1,(6,2)}(I_{Z'})=4$&$\beta_{2,(6,3)}(I_{Z'})=2$\\
$\beta_{0,(5,2)}(I_{Z'})=3$&$\beta_{1,(5,3)}(I_{Z'})=5$&$\beta_{2,(5,4)}(I_{Z'})=2$\\
$\beta_{0,(4,3)}(I_{Z'})=3$&$\beta_{1,(4,4)}(I_{Z'})=5$&$\beta_{2,(4,5)}(I_{Z'})=2$\\
$\beta_{0,(3,4)}(I_{Z'})=3$&$\beta_{1,(3,5)}(I_{Z'})=5$&$\beta_{2,(3,6)}(I_{Z'})=2$\\
$\beta_{0,(2,5)}(I_{Z'})=3$&$\beta_{1,(2,6)}(I_{Z'})=4$&$\beta_{2,(2,7)}(I_{Z'})=1$\\
$\beta_{0,(1,6)}(I_{Z'})=2$&$\beta_{1,(1,7)}(I_{Z'})=2$&\\
$\beta_{0,(0,7)}(I_{Z'})=1$&&\\
\end{tabular}
\\
Moreover we have $\varphi_{5,3}=(1,2,2,2,1,1,0\ldots).$ Hence, if
$a+b=9$ we have
$$\beta_{0,(a,b)}(Z)=\beta_{0,(a,b-2)}(Z')+\varphi_{5,3}(b).$$
Then the non zero bigraded Betti numbers of $R/I_{Z}$ are:

\begin{tabular}{lll}
$\beta_{0,(9,0)}(Z)=1$      &$\beta_{1,(9,1)}(Z)=2$       &$\beta_{2,(9,2)}(Z)=1$\\
$\beta_{0,(8,1)}(Z)=2$      &$\beta_{1,(8,2)}(Z)=4$       &$\beta_{2,(8,3)}(Z)=2$\\
$\beta_{0,(7,2)}(Z)=3$      &$\beta_{1,(7,3)}(Z)=6$       &$\beta_{2,(7,4)}(Z)=3$\\
$\beta_{0,(6,3)}(Z)=4$      &$\beta_{1,(6,4)}(Z)=7$       &$\beta_{2,(6,5)}(Z)=3$\\
$\beta_{0,(5,4)}(Z)=4$      &$\beta_{1,(5,5)}(Z)=7$       &$\beta_{2,(5,6)}(Z)=3$\\
$\beta_{0,(4,5)}(Z)=4$      &$\beta_{1,(4,6)}(Z)=6$       &$\beta_{2,(4,7)}(Z)=2$\\
$\beta_{0,(3,6)}(Z)=3$          &$\beta_{1,(3,7)}(Z)=5$     &$\beta_{2,(3,8)}(Z)=2$\\
$\beta_{0,(2,7)}(Z)=3$            &$\beta_{1,(2,8)}(Z)=4$       &$\beta_{2,(2,9)}(Z)=1$\\
$\beta_{0,(1,8)}(Z)=2$            &$\beta_{1,(1,9)}(Z)=2$       &\\
$\beta_{0,(0,9)}(Z)=1.$            &&
\end{tabular}

\end{example}






\subsection{Second Case $m_{11}> m_{21}$} To conclude the description of the graded Betti numbers of $R/I_Z$ we need some preliminaries.

\begin{definition}\label{defD} Let $Z=m_{11}P_{11}+m_{12}P_{12}+m_{21}P_{21}$ be a set of three fat points in $\popo$, set $B_Z:=m_{12}-m_{11},$ we define the following sets of integers associated to $Z$:
	
\noindent{
\begin{tabular}{l}
$D_1(Z)=\{(a,b)\in\mathbb{N}^2\ | \ 0\le b < (-B_Z)_+-(-B_Z-m_{21})_+\ \text{and}\    a+2b=m_{11}+m_{21} \}$\\
$D_2(Z)=\{(a,b)\in\mathbb{N}^2\ | \ \ (-B_Z)_+-(-B_Z-m_{21})_+\le b<m_{21}\ \text{and}\     a+b=\max\{m_{11}, m_{12}+m_{21}\}  \}$\\
$D_3(Z)=\{(a,b)\in\mathbb{N}^2\ | \ \ m_{21}\le b \le m_{21}+|B_Z+m_{21}| \ \text{and}\ a+b=\max\{m_{11}, m_{12}+m_{21} \}$\\
$D_4(Z)=\{(a,b)\in\mathbb{N}^2\ | \ b > m_{21}+|B_Z+m_{21}| \
\text{and}\  2a+b=m_{11}+m_{12}   \}.$
\end{tabular}}

\end{definition}

Some immediate remarks follows from Definition \ref{defD}. We recall that $Z$ is an arithmetically Cohen-Macaulay (ACM for short) set of points if $R/I_Z$ is Cohen Macaulay. See \cite{GuVTbook} for more details about ACM set of fat points.
\begin{remark}\label{remD}   It is a matter of computation to show that
\begin{enumerate}
  \item[i)]  $B_{Z_1}=B_{Z}+1;$
    \item[ii)]  $D_1(Z)=\emptyset$ if and only if $B_{Z}\ge 0;$
    \item[iii)]  $D_2(Z)=\emptyset$ if and only if  $B+m_{21}\le 1$ (in this case $Z$ is ACM by Theorem 6.21 \cite{GuVTbook} );
    \item[iv)] If $B<0$ and  $B+m_{21}= 1$ then $(a,-B)\notin D_3(Z)$ for any $a;$
    \item[v)] If $(a,b-1)\in D_i(Z_1)$ then $(a,b)\in D_i(Z),$ for $i=1, 2, 3, 4;$
    \item[vi)] If $(a,b)\in D_i(Z)$ for some $i$ then $(a,\bar{b} ), (\bar{a} ,b)\notin D_j$ for any $\bar{a}\neq a,\ \bar{b}\neq b$ and $j=1, 2, 3, 4.$
    \item [vii)] If $(a-1,b)\in D_4(Z)$ then $(a,b-1)\notin D_3(Z).$
  \end{enumerate}
  
\end{remark}
\begin{theorem}\label{Thmcaso2} If $m_{11}> m_{21}$  then the bigraded Betti numbers of $R/I_Z$ are:

{\Small \begin{tabular}{l} $\beta_{0,(a,b)}(Z)$=$\begin{cases}
                       1  & \text{if}\ (a,b)\in D_1(Z)\\
                       \varphi_{m_{21}+B,B}(b)   & \text{if}\ (a,b)\in D_2(Z)\\
                       1+\varphi_{m_{21}+B,B}(b)      & \text{if}\ (a,b)\in D_3(Z)\\
                       1    & \text{if}\ (a,b)\in D_4(Z)\\
                       0 & \text{otherwise}   \end{cases}$\\

\\
$\beta_{1,(a,b)}(Z)=\begin{cases}
            1 & \textit{if} \ (a,b-1)\in D_1(Z)\\
            \beta_{0,(a,b-1)}(Z) +\beta_{0,(a-1,b)}(Z) -1 & \textit{if} \ (a,b-1)\in D_2(Z)\cup D_3(Z)\\
            1 & \textit{if} \ (a-1,b)\in D_4(Z)\\
            0 & \textit{otherwise}  \end{cases}$\\
\\
$\beta_{2,(a,b)}(Z)= \begin{cases}
                   \beta_{0,(a-1,b-1)}(Z) -1 &  \textit{if} \ (a-1,b-1)\in D_2(Z)\cup D_3(Z)\\
                     0 & \textit{otherwise}\end{cases}$\\
\end{tabular}}

where the $D_i(Z)$ are defined in Definition \ref{defD}.
\end{theorem}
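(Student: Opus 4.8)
The plan is to mimic the proof of Theorem~\ref{Thmcaso1}: proceed by induction on $m_{21}$ (the smallest multiplicity), using Lemma~\ref{R2.10+T2.12} to pass from $Z$ to $Z_1$, and Lemma~\ref{L2.2} as the base case. Since we are in the regime $m_{11}>m_{21}$, when $m_{21}=0$ the set $Z$ degenerates to $m_{11}P_{11}+m_{12}P_{12}$, two collinear fat points, whose resolution is given explicitly by Lemma~\ref{L2.2}. First I would verify that for $m_{21}=0$ the four sets $D_i(Z)$ and the stated formula for $\beta_{0,(a,b)}(Z)$ reproduce exactly the shifts appearing in Lemma~\ref{L2.2}; here $D_2(Z)=D_3(Z)=\emptyset$ by Remark~\ref{remD}(iii) (since $B+m_{21}=B=m_{12}-m_{11}$, and the relevant $\varphi$-term vanishes), so only $D_1$ and $D_4$ contribute the generators with Betti number $1$, matching the single summands $R(t,-(m_{11}-t)_+-(m_{12}-t)_+)$.

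\emph{Inductive step for $\beta_0$.} Assuming $m_{21}>0$, I would invoke Lemma~\ref{R2.10+T2.12}, which expresses the $0$-th free module of $I_Z$ as $\bigoplus_{(a,b)\in A_0(Z)}R(-a,-b)\oplus L_0(0,-1)$, where $L_0$ is the $0$-th module for $I_{Z_1}$. Reading off Betti numbers degree-by-degree, this gives the recursion
\[
\beta_{0,(a,b)}(Z)=\beta_{0,(a,b-1)}(Z_1)+\chi_{A_0(Z)}(a,b),
\]
where the indicator contributes $1$ exactly on $A_0(Z)$. The key bookkeeping is that $B_{Z_1}=B_Z+1$ by Remark~\ref{remD}(i) and, by Remark~\ref{remD}(v), each stratum $D_i(Z_1)$ shifts into $D_i(Z)$ under $b\mapsto b+1$. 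I would then match cases: for $(a,b)\in D_2(Z)\cup D_3(Z)$ the shifted $\varphi$-term transforms via Proposition~\ref{indphi} exactly as in Theorem~\ref{Thmcaso1}, i.e. $\varphi_{m_{21}+B,B}(b)$ arises from $\varphi_{(m_{21}-1)+(B+1),B+1}(b-1)$ plus the $A_0$-contribution on the strip $b\le B_Z+\text{(boundary)}$; for $(a,b)\in D_1(Z)\cup D_4(Z)$ the value stays $1$, consistently with the fact that $A_0(Z)$ supplies the missing generator on the new outer diagonal. The disjointness in Remark~\ref{remD}(vi) guarantees no two strata overlap in a fixed bidegree, so the case analysis is well-defined.

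\emph{Steps for $\beta_1,\beta_2$.} These follow the same template. From Lemma~\ref{R2.10+T2.12} the first syzygy module is $\bigoplus_{A_1(Z)}R(-a,-b)^2\oplus R(-m_{11}-m_{21},-(B_Z)_+-1)\oplus L_1(0,-1)$ and the second is $\bigoplus_{A_2(Z)}R(-a,-b)\oplus L_2(0,-1)$. Using the inductive hypothesis to rewrite $\beta_{1,\bullet}(Z_1)$ and $\beta_{2,\bullet}(Z_1)$ in terms of $\beta_0$ of $Z_1$, then converting $\beta_0(Z_1)$ back to $\beta_0(Z)$ via the already-established $\beta_0$-formula, one recovers the stated expressions $\beta_{0,(a,b-1)}(Z)+\beta_{0,(a-1,b)}(Z)-1$ on $D_2\cup D_3$ and the isolated $1$'s on the $D_1$- and $D_4$-shifted positions. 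Remark~\ref{remD}(vii) is what rules out an unwanted overlap between a $D_4$-type generator and a $D_3$-type syzygy, keeping the $\beta_1$ formula clean.

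\emph{The main obstacle} I expect is the boundary between the strata $D_2$ and $D_3$ (and the degenerate configurations flagged in Remark~\ref{remD}(iii)--(iv)). The defining inequalities for $D_i(Z)$ involve the case split $(-B_Z)_+-(-B_Z-m_{21})_+$, which behaves differently according to the signs of $B_Z$ and $B_Z+m_{21}$; verifying that the single extra generator predicted by $A_0(Z)$ lands on the correct side of this boundary under the shift $b\mapsto b+1$, and that Proposition~\ref{indphi} splices the two $\varphi$-pieces together without a spurious $\pm1$ at the seam, is the delicate part. I would handle it by treating the subcases $B_Z+m_{21}\le 0$, $B_Z+m_{21}=1$ (using Remark~\ref{remD}(iv) to excise $b=-B_Z$ from $D_3$), and $B_Z+m_{21}\ge 2$ separately, checking in each that the recursion's indicator term aligns with the jump in the closed formula. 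The rest is routine degree arithmetic analogous to Theorem~\ref{Thmcaso1}.
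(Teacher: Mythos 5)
Your overall strategy is exactly the paper's: induction on $m_{21}$ with Lemma \ref{L2.2} as base case, the recursion of Lemma \ref{R2.10+T2.12} relating $\beta_{u,(a,b)}(Z)$ to $\beta_{u,(a,b-1)}(Z_1)$, Proposition \ref{indphi} to absorb the shift $B_{Z_1}=B_Z+1$ into the $\varphi$-term, and Remark \ref{remD} to control the strata. However, your base case is wrong as stated, and the error is not cosmetic. When $m_{21}=0$ one has $D_1(Z)=D_2(Z)=\emptyset$ (both $b$-ranges in Definition \ref{defD} are vacuous: the upper bound for $D_1$ is $(-B)_+-(-B)_+=0$, and $D_2$ requires $b<m_{21}=0$), while $D_3(Z)=\{(a,b)\ :\ 0\le b\le |B|,\ a+b=\max\{m_{11},m_{12}\}\}$ is nonempty; together with $D_4(Z)$ it carries all the generators of Lemma \ref{L2.2}: the summands with $t\ge \min\{m_{11},m_{12}\}$ land on $D_3$, where $\varphi_{B,B}\equiv 0$ makes the claimed value $1+0=1$, and those with $t<\min\{m_{11},m_{12}\}$ land on $D_4$. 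Your assertion that $D_2(Z)=D_3(Z)=\emptyset$ by Remark \ref{remD}(iii) misreads that remark: it concerns $D_2$ only, and its hypothesis $B+m_{21}\le 1$ is not guaranteed here since $m_{12}$ may exceed $m_{11}$ arbitrarily (the theorem only assumes $m_{11}>m_{21}$).

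This slip would derail the induction, not just the base case. The inductive step matches strata via Remark \ref{remD}(v), $(a,b-1)\in D_i(Z_1)\Rightarrow (a,b)\in D_i(Z)$, and the claimed values on $D_1$ and $D_3$ genuinely diverge as soon as $m_{21}>0$, since $\varphi_{m_{21}+B,B}$ is then no longer identically zero on $D_3$; so seeding the collinear-case generators into $D_1$ instead of $D_3$ attaches the wrong values to the wrong strata at every later stage of the recursion. With the base case corrected to $D_1=D_2=\emptyset$ and $\varphi_{B,B}\equiv 0$, the rest of your plan --- the case split according to the sign of $B_Z$ (the paper's proof treats $m_{12}\le m_{11}$ and $m_{12}>m_{11}$ separately in the inductive step), the use of Proposition \ref{indphi} at the seam between $D_2$ and $D_3$, and Remark \ref{remD}(vi)--(vii) to rule out overlaps in the $\beta_1$ computation --- coincides with the paper's argument and goes through.
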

\begin{proof}We proceed by induction on $m_{21}.$ If $m_{21}=0$ then $Z$ is a set of 2 collinear (fat) points, $D_1=D_2=\emptyset$ and $\varphi_{B,B}(b)=0.$ Therefore the statement follows by Lemma \ref{L2.2}. Assume now $m_{21}>0,$ by Lemma \ref{R2.10+T2.12} we have

\begin{tabular}{l}

$\beta_{0,(a,b)}(Z) =\begin{cases} \beta_{0,(a,b-1)}({Z_1})+1
& \text{if}\ a+b= m_{11}+m_{21}+ (m_{12}-m_{11})_+ \\ & \
\text{and}\ b\le (m_{12}-m_{11})_+   \\
\beta_{0,(a,b-1)}({Z_1}) &  \text{otherwise.}
                                \end{cases}$
\end{tabular}

If $m_{12}\le m_{11},$ i.e.  $B_Z< 0$ we get

\begin{tabular}{l}
$\beta_{0,(a,b)}(Z) =\begin{cases} 1 & \text{if}\ (a,b)= (m_{11}+m_{21},0) \\
                             \beta_{0,(a,b-1)}({Z_1}) &  \text{otherwise.}
                                \end{cases}$
\end{tabular}

So, by the inductive hypothesis and using Remark \ref{remD},  we have

\begin{tabular}{l}
$\beta_{0,(a,b)}(Z) =\begin{cases} 1 & \text{if}\ (a,b)= (m_{11}+m_{21},0) \\
                      1 & \text{if}\ (a,b-1)\in D_1(Z_1)\\
                      \varphi_{m_{21}+B_Z,B_Z+1}(b-1)  & \text{if}\ (a,b-1)\in D_2(Z_1)\\
                      1+\varphi_{m_{21}+B_Z,B_Z+1}(b-1)  & \text{if}\ (a,b-1)\in D_3(Z_1)\\
                      1 & \text{if}\ (a-1,b)\in D_4(Z_1)\\
                      0 &  \text{otherwise.}
                                \end{cases}$
\end{tabular}

By using Proposition \ref{R2.10+T2.12} we are
done. Consider now $m_{12}> m_{11}$ i.e. $B_Z>0.$ We get

\begin{tabular}{l}
$\beta_{0,(a,b)}(Z) =\begin{cases}
                        \beta_{0,(a,b-1)}({Z_1})+1 & \text{if}\ a+b= m_{21}+ m_{12}  \ \text{and}\ b\le  B \\
                        \beta_{0,(a,b-1)}({Z_1}) &  \text{otherwise}
                                \end{cases}$
\end{tabular}

where, by inductive hypothesis, the bigraded Betti numbers of
degree zero of $R/I_{Z_1}$ are

{\Small\begin{tabular}{l}
$\beta_{0,(a,b-1)}({Z_1})$=$\begin{cases}
                      \varphi_{m_{21}+B_Z,B_Z+1}(b-1)   & \text{if} \ (a,b-1)\in D_2(Z_1)\\
                       1+\varphi_{m_{21}+B_Z,B_Z+1}(b-1) & \text{if} \ (a,b-1)\in D_3(Z_1)\\
                       1& \text{if}\ (a-1,b)\in D_4(Z_1)\\
                       0 & \text{otherwise.}
                                \end{cases}$\\
\\
\end{tabular}}

Therefore, by using Lemma \ref{R2.10+T2.12}  
and Remark \ref{remD}, we are done.
Finally the computation of $\beta_{1,(a,b)}(Z)$ and
$\beta_{2,(a,b)}(Z)$ requires the same procedure as above by using the inductive hypothesis and Lemma \ref{R2.10+T2.12}.
\end{proof}

When the points have the same multiplicity $Z$ is called a homogeneous set of fat points. The graded Betti numbers for a homogeneous set of points $Z$ are given in Theorem \ref{Thmcaso1}. In this case, it became easier to write them, as the next corollary shows.
\begin{corollary}\label{Thom} Let $Z=mP_{11}+mP_{12}+mP_{21}$ be a homogeneous set of points in $\popo.$ Then the bigraded Betti numbers of $I_Z$ are:

\begin{tabular}{l}

$\beta_{0,(a,b)}(Z)=\begin{cases}
                 \varphi_{m+1}(b) &\text{if}\ a+b=2m\\
                 0 & \text{otherwise} \\
                \end{cases}$\\
\\
$\beta_{1,(a,b)}(Z)=\left( \beta_{0,(a,b-1)}(Z) +\beta_{0,(a-1,b)}(Z) -1 \right)_+$\\
$\beta_{2,(a,b)}(Z)=\left( \beta_{0,(a-1,b-1)}(Z) -1
\right)_+$
\end{tabular}

\end{corollary}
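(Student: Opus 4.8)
The plan is to specialize Theorem \ref{Thmcaso1} to the homogeneous case $m_{11}=m_{12}=m_{21}=m$. Since all multiplicities are equal, we are in the regime $m_{11}\le m_{21}$, so Theorem \ref{Thmcaso1} applies directly. The formulas for $\beta_{1,(a,b)}(Z)$ and $\beta_{2,(a,b)}(Z)$ are literally identical in the two statements, so those require no work beyond citing Theorem \ref{Thmcaso1}. The entire content of the corollary is therefore the simplification of the degree-zero Betti numbers, namely showing that
\[
\left(\min\{a,\,b-m,\,m-m\}+1\right)_+ + \varphi_{m,\,m-m}(b) \;=\; \varphi_{m+1}(b)
\]
whenever $a+b=2m$ (and that both sides vanish otherwise).

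First I would substitute $m_{11}=m_{21}=m$ and $m_{12}=m$ into the first branch of $\beta_{0,(a,b)}(Z)$ from Theorem \ref{Thmcaso1}. The support condition becomes $a+b=m_{21}+m_{12}=2m$, matching the corollary. The second summand simplifies immediately: $B=m_{12}-m_{11}=0$, so $\varphi_{m_{12},m_{12}-m_{11}}(b)=\varphi_{m,0}(b)=\varphi_m(b)-\varphi_0(b)=\varphi_m(b)$, using the convention $\varphi_0\equiv 0$ stated after the definition of $\varphi_t$. The first summand simplifies because $m-m=0$ forces $\min\{a,\,b-m,\,0\}+1 = \min\{a,b-m,0\}+1$; I would argue that on the support line $a+b=2m$ one has $a = 2m-b$, so $a\ge 0$ means $b\le 2m$, and the value of $\min\{a,b-m,0\}$ is governed by the sign of $b-m$.

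The crux of the argument is the pointwise identity $\left(\min\{a,b-m,0\}+1\right)_+ + \varphi_m(b) = \varphi_{m+1}(b)$ on the line $a+b=2m$. I would verify this by comparing against the recursive definition of $\varphi_{m+1}$ in terms of $\varphi_m$: for $0\le b<m$ we have $\varphi_{m+1}(b)=\varphi_m(b)$, and indeed there $b-m<0$ gives $\min\{a,b-m,0\}=b-m<0$, so the clamped term $(\min+1)_+$ contributes $0$ once $b-m+1\le 0$, i.e. $b<m$; for $m\le b<2m$ the definition gives $\varphi_{m+1}(b)=\varphi_m(b)+1$, and there $b-m\ge 0$ together with $a=2m-b\ge 0$ makes $\min\{a,b-m,0\}=0$, so the first term equals $(0+1)_+=1$, matching the $+1$. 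The boundary cases $b=m-1$, $b=m$ (and the clamping at $b=0$ and $b=2m$) I would check explicitly since that is exactly where the $(-)_+$ and the floor in the recursion interact. The main obstacle is precisely this careful case analysis at the transition point $b=m$ and at the endpoints of the support line, ensuring that the clamped minimum reproduces the single unit jump built into the recursion for $\varphi_{m+1}$; once that pointwise identity is established the corollary follows at once.
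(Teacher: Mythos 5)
Your proposal is correct and takes essentially the same route as the paper: the paper's one-line proof likewise specializes Theorem \ref{Thmcaso1} to $m_{11}=m_{12}=m_{21}=m$, observes that on the line $a+b=2m$ the term $\left(\min\{a,b-m,0\}+1\right)_+$ equals $1$ for $b\ge m$ and $0$ for $b<m$ while $\varphi_{m,0}(b)=\varphi_m(b)$, and then identifies the result with $\varphi_{m+1}(b)$ via the recursion. The only cosmetic difference is that you invoke the recursive definition of $\varphi_{m+1}$ directly where the paper cites Proposition \ref{indphi}, and your explicit attention to the boundary values $b=m-1$, $b=m$, and $b=2m$ is exactly the right check.
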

\begin{proof} The proof is an immediate consequence of Theorem \ref{Thmcaso1} and Proposition \ref{indphi} since we have

$\beta_{0,(a,b)}(Z)$=$\begin{cases}
       1 +\varphi_{m}(b)&\text{if}\     b\ge m\ \text{and}\         a+b=2m\\
       \varphi_{m}(b)&\text{if}\    b< m\   \text{and}\         a+b=2m\\
       0 & \text{otherwise.}
                                \end{cases}$\\
\end{proof}
\section{The Hilbert Function of $Z$}
In this section we explicitly compute the Hilbert function of the set of fat points $Z.$ 
Recall that the Hilbert function of $Z$ is a numeric function $H_Z:=H_{R/I_Z}:\mathbb{N}^2\to \mathbb{N},$ defined by
$$H_Z(a,b)=\dim_K (R/I_Z)_{(a,b)}= \dim_k R_{(a,b)}-\dim_k I_Z{(a,b)}.$$
The first difference of the Hilbert function is defined as
 $$\Delta H_{Z}(a,b)=H_{Z}(a,b)+H_{Z}(a-1,b-1)-H_{Z}(a-1,b)-H_{Z}(a,b-1).$$
From now on we will write  $(i,j)\le (a,b)$ iff both $i\le a $ and $ j\le b.$

The following are the multigraded version of well known results for standard graded algebras.   
Next lemma shows how we can compute the Hilbert function of $Z$ from a minimal free resolution of $R/I_Z$. 
\begin{lemma}\label{HFlemma}Let $Z$ be a set of fat points in $\popo,$ then	$$H_Z(a,b)=(a+1)(b+1)- \sum_{(i,j)\le (a,b)}(a-i+1)(b-j+1)(\beta_{0,(i,j)}(Z)-\beta_{1,(i,j)}(Z)+\beta_{2,(i,j)}(Z)).$$
\end{lemma}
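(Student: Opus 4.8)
The plan is to derive the formula from the minimal bigraded free resolution of $R/I_Z$ by computing Hilbert functions additively along the resolution. The key input is that the Hilbert function is additive on short (hence long) exact sequences of graded modules, so from the resolution
$$0\to \bigoplus R(-i,-j)^{\beta_{2,(i,j)}}\to \bigoplus R(-i,-j)^{\beta_{1,(i,j)}}\to \bigoplus R(-i,-j)^{\beta_{0,(i,j)}}\to R\to R/I_Z\to 0$$
one gets the alternating sum
$$H_Z(a,b)=\dim_k R_{(a,b)}-\sum_{u,i,j}(-1)^u\,\beta_{u,(i,j)}(Z)\,\dim_k R(-i,-j)_{(a,b)}.$$
First I would record the Hilbert function of the free module $R$ itself: since $R=k[x_0,x_1,x_2,x_3]$ with $\deg x_0=\deg x_1=(1,0)$ and $\deg x_2=\deg x_3=(0,1)$, the bigraded piece $R_{(a,b)}$ is the tensor product of the degree-$a$ part of $k[x_0,x_1]$ and the degree-$b$ part of $k[x_2,x_3]$, so $\dim_k R_{(a,b)}=(a+1)(b+1)$ for $a,b\ge 0$ and $0$ otherwise. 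This immediately accounts for the leading term $(a+1)(b+1)$.

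Next I would handle the shifted modules. Since $R(-i,-j)_{(a,b)}=R_{(a-i,b-j)}$, its dimension is $(a-i+1)(b-j+1)$ precisely when $a-i\ge 0$ and $b-j\ge 0$, that is when $(i,j)\le(a,b)$, and $0$ otherwise. Substituting this into the alternating sum and restricting the summation index to those $(i,j)\le(a,b)$ (the only ones contributing a nonzero dimension) converts the triple sum into
$$\sum_{(i,j)\le(a,b)}(a-i+1)(b-j+1)\bigl(\beta_{0,(i,j)}(Z)-\beta_{1,(i,j)}(Z)+\beta_{2,(i,j)}(Z)\bigr),$$
which is exactly the claimed expression; the sign $(-1)^u$ reproduces the alternating pattern $\beta_0-\beta_1+\beta_2$. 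The one genuinely careful point is justifying that the alternating sum of dimensions collapses correctly: this is the standard fact that if $0\to M_n\to\cdots\to M_0\to 0$ is an exact complex of finite-dimensional vector spaces (degreewise, for each fixed bidegree $(a,b)$), then $\sum(-1)^k\dim (M_k)_{(a,b)}=0$. Applying this degreewise to the full resolution (treating $R/I_Z$ as the $(-1)$-st term and $R$ as the $0$-th) yields the identity, and the boundary positivity $\dim R(-i,-j)_{(a,b)}=0$ outside $(i,j)\le(a,b)$ lets me truncate the sum.

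I do not expect a serious obstacle here, as the statement is the multigraded analogue of the classical single-graded formula and the argument is purely formal once additivity on exact sequences is invoked. The only mild care needed is bookkeeping: keeping the sign convention consistent (the $u$-th syzygy module contributes with sign $(-1)^u$, and $R/I_Z$ sits with the opposite sign so that its Hilbert function equals $\dim R_{(a,b)}$ minus the $\beta$-sum), and confirming that the finiteness of the resolution and of each graded piece makes the alternating sum well-defined with no convergence issues. Thus the proof reduces to: (i) state degreewise additivity of $\dim_k$ on the exact resolution; (ii) plug in $\dim_k R_{(a,b)}=(a+1)(b+1)$ and $\dim_k R(-i,-j)_{(a,b)}=(a-i+1)(b-j+1)$ for $(i,j)\le(a,b)$, zero otherwise; and (iii) collect terms to read off the displayed formula.
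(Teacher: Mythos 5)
Your proposal is correct and follows essentially the same route as the paper: take the bigraded piece of the minimal free resolution in each bidegree $(a,b)$, use exactness to equate the alternating sum of dimensions to zero, and substitute $\dim_k R(-i,-j)_{(a,b)}=(a-i+1)(b-j+1)$ for $(i,j)\le(a,b)$ (zero otherwise) to truncate the sum. Your write-up is in fact slightly more explicit than the paper's, which leaves the final alternating-sum bookkeeping implicit.
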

	\begin{proof} Since a minimal free resolution of $R/I_Z$ $$0\to \bigoplus R(-i,-j)^{\beta_{2,(i,j)}(Z)}\to \bigoplus  R(-i,-j)^{\beta_{1,(i,j)}(Z)}\to \bigoplus  R(-i,-j)^{\beta_{0,(i,j)}(Z)}\to R\to R/I_Z \to 0 $$
	has bigraded morphisms, we get  the following exact sequence of vector spaces
	{\Small $$0\to \bigoplus \left(R(-i,-j)^{\beta_{2,(i,j)}(Z)}\right)_{(a,b)}\to \bigoplus  \left(R(-i,-j)^{\beta_{1,(i,j)}(Z)}\right)_{(a,b)}\to \bigoplus  \left(R(-i,-j)^{\beta_{0,(i,j)}(Z)}\right)_{(a,b)}\to (R)_{(a,b)}\to (R/I)_{(a,b)} \to 0 $$}
	Moreover {\small$$\dim_k \left(\bigoplus  R(-i,-j)^{\beta_{u,(i,j)}(Z)}\right)_{(a,b)}= \sum_{(i,j)\le (a,b)} \dim_k \left(R(-i,-j)^{\beta_{u,(i,j)}(Z)}\right)_{(a,b)}=
		\sum_{(i,j)\le (a,b) } \beta_{u,(i,j)}(Z)(a-i+1)(b-j+1).$$ }
		\end{proof}	

\begin{corollary}\label{HFcor} Let $B_{u,(a,b)}:=\sum_{(i,j)\le (a,b)} \beta_{u,(a,b)}(Z)$ then
	$$\Delta H_{Z}(a,b)=1-B_{0,(a,b)}+B_{1,(a,b)}-B_{2,(a,b)} $$
\end{corollary}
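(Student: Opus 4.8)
The plan is to substitute the closed formula for $H_Z$ from Lemma \ref{HFlemma} directly into the definition of $\Delta H_Z$ and then simplify, exploiting the linearity of the difference operator together with the fact that the summand factors as a product of a function of $a$ and a function of $b$. Write $\gamma_{(i,j)} := \beta_{0,(i,j)}(Z) - \beta_{1,(i,j)}(Z) + \beta_{2,(i,j)}(Z)$, so that Lemma \ref{HFlemma} reads $H_Z(a,b) = (a+1)(b+1) - g(a,b)$ with $g(a,b) = \sum_{(i,j)\le(a,b)} (a-i+1)(b-j+1)\gamma_{(i,j)}$.

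First I would dispose of the polynomial part: a one-line computation gives $(a+1)(b+1) + ab - a(b+1) - (a+1)b = 1$, so the term $(a+1)(b+1)$ contributes exactly $1$ to $\Delta H_Z(a,b)$. The essential point is then the treatment of $g$. Since $i\le a$ forces $a-i+1\ge 1$, on the region $(i,j)\le(a,b)$ each summand $(a-i+1)(b-j+1)$ coincides with $(a-i+1)_+(b-j+1)_+$, while for $(i,j)\not\le(a,b)$ at least one positive part vanishes. Hence I would rewrite $g(a,b) = \sum_{(i,j)} (a-i+1)_+(b-j+1)_+\,\gamma_{(i,j)}$, the sum now ranging over all $(i,j)$ with the range condition absorbed into the positive parts. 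This makes the summation range independent of $(a,b)$, so the operator $\Delta$ passes through the sum term by term.

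Because the summand factors as $p_i(a)q_j(b)$ with $p_i(a)=(a-i+1)_+$ and $q_j(b)=(b-j+1)_+$, the double difference splits as $\Delta\bigl[p_i(a)q_j(b)\bigr]=\bigl(p_i(a)-p_i(a-1)\bigr)\bigl(q_j(b)-q_j(b-1)\bigr)$. A direct check shows that $p_i(a)-p_i(a-1)$ equals $1$ if $a\ge i$ and $0$ otherwise, and likewise for $q_j$; therefore $\Delta\bigl[p_i(a)q_j(b)\bigr]$ is precisely the indicator of the event $(i,j)\le(a,b)$. Consequently $\Delta g(a,b)=\sum_{(i,j)\le(a,b)}\gamma_{(i,j)} = B_{0,(a,b)}-B_{1,(a,b)}+B_{2,(a,b)}$, and combining with the contribution $1$ from the polynomial part yields $\Delta H_Z(a,b)=1 - B_{0,(a,b)}+B_{1,(a,b)}-B_{2,(a,b)}$, as claimed.

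The whole argument is essentially mechanical once set up correctly, and I expect the only delicate step to be the boundary bookkeeping: rewriting the $(a,b)$-dependent summation range via the $(\cdot)_+$ notation so that $\Delta$ commutes with the sum, and verifying the one-variable difference $p_i(a)-p_i(a-1)$ at the edge case $a=i$ (where $p_i(i-1)=0$). Handling these corners correctly is what guarantees that the telescoping collapses exactly to the cumulative sums $B_{u,(a,b)}$ rather than leaving stray boundary terms.
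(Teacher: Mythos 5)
Your proof is correct and takes the same route the paper intends: the paper's own proof of Corollary \ref{HFcor} is just the one-line remark that it follows from Lemma \ref{HFlemma}, and your computation---splitting off the polynomial part (which contributes exactly $1$), absorbing the $(a,b)$-dependent summation range into the positive parts $(a-i+1)_+(b-j+1)_+$ so that $\Delta$ commutes with the sum, and checking that $\Delta$ of each factored summand is the indicator of $(i,j)\le(a,b)$---is precisely the verification the paper omits. Your boundary bookkeeping, including the edge case $a=i$, is handled correctly, so the telescoping to the cumulative sums $B_{u,(a,b)}$ goes through without stray terms.
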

\begin{proof} This follows from Lemma \ref{HFlemma}. 
\end{proof}

In the following two propositions we explicitly compute the first difference  of the Hilbert Function of $Z$  using Corollary \ref{HFcor}. 	
\begin{proposition}\label{HFcaso1}Let  $Z=m_{11}P_{11}+m_{12}P_{12}+m_{21}P_{21}.$ If $m_{11}\le m_{21}$  then 
	$$\Delta H_Z(a,b)=\begin{cases}1 & \text{if}\ a+b< m_{12}+m_{21}\\
								   1-\beta_{0,(a,b)}(Z) & \text{if}\ a+b= m_{12}+m_{21}\\
								   0 & \text{if}\ a+b> m_{12}+m_{21}.\\\end{cases}
	$$ 
	
\end{proposition}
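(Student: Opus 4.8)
The plan is to use Corollary \ref{HFcor}, which reduces the computation of $\Delta H_Z(a,b)$ to the three cumulative sums $B_{u,(a,b)}=\sum_{(i,j)\le(a,b)}\beta_{u,(i,j)}(Z)$ for $u=0,1,2$. Since Theorem \ref{Thmcaso1} tells us that every nonzero Betti number $\beta_{0,(i,j)}(Z)$ lives on the single antidiagonal $i+j=m_{12}+m_{21}$, and correspondingly $\beta_{1,(i,j)}(Z)$ lives on $i+j=m_{12}+m_{21}+1$ and $\beta_{2,(i,j)}(Z)$ on $i+j=m_{12}+m_{21}+2$, the three regions in the statement are exactly governed by how far the point $(a,b)$ has advanced past these antidiagonals. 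First I would handle the easy range $a+b<m_{12}+m_{21}$: here no generator of any syzygy module has degree $\le(a,b)$, so $B_{0,(a,b)}=B_{1,(a,b)}=B_{2,(a,b)}=0$ and Corollary \ref{HFcor} gives $\Delta H_Z(a,b)=1$ immediately.

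For the middle case $a+b=m_{12}+m_{21}$, only the degree-zero Betti numbers on the antidiagonal through $(a,b)$ with $i+j=m_{12}+m_{21}$ and $(i,j)\le(a,b)$ contribute, since $\beta_{1}$ and $\beta_{2}$ sit on strictly higher antidiagonals. So I would need to show that $B_{0,(a,b)}=\sum_{(i,j)\le(a,b),\,i+j=m_{12}+m_{21}}\beta_{0,(i,j)}(Z)$ equals exactly $\beta_{0,(a,b)}(Z)$, which forces $\Delta H_Z(a,b)=1-\beta_{0,(a,b)}(Z)$. This is the crux: I must verify that among all degrees $(i,j)$ on that antidiagonal with $(i,j)\le(a,b)$, only $(i,j)=(a,b)$ itself carries a nonzero $\beta_0$. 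Because all these degrees share $i+j=m_{12}+m_{21}$, imposing $(i,j)\le(a,b)$ with the same sum forces $i=a$ and $j=b$; hence the sum collapses to the single term $\beta_{0,(a,b)}(Z)$ automatically, and no genuine combinatorial identity is needed.

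For the last case $a+b>m_{12}+m_{21}$, all three antidiagonals lie weakly below $(a,b)$ once $a+b$ is large enough, so $B_{0},B_1,B_2$ become the full Betti sums along their respective antidiagonals intersected with $\{(i,j)\le(a,b)\}$. The key step here is the numerical identity $B_{0,(a,b)}-B_{1,(a,b)}+B_{2,(a,b)}=1$, equivalently $\sum_{u}(-1)^u B_{u,(a,b)}=1$. I would establish this by summing the defining relations of Theorem \ref{Thmcaso1}, namely $\beta_{1,(a,b)}(Z)=(\beta_{0,(a,b-1)}+\beta_{0,(a-1,b)}-1)_+$ and $\beta_{2,(a,b)}(Z)=(\beta_{0,(a-1,b-1)}-1)_+$, over all $(i,j)\le(a,b)$, carefully tracking the contribution of the $+1$ (resp.\ $-1$) truncation terms; the positive parts $(\,\cdot\,)_+$ are in fact genuinely positive on the relevant antidiagonals away from the two extreme endpoints, which is what makes the alternating sum telescope to the Euler-characteristic value $1$. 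The main obstacle will be the bookkeeping of the $(\,\cdot\,)_+$ boundary terms: I must check that dropping the truncations introduces no error, by confirming that the arguments of $(\,\cdot\,)_+$ are nonnegative precisely where $\beta_1$ and $\beta_2$ are supported and that the endpoint corrections exactly cancel. Once that alternating sum is shown to equal $1$, Corollary \ref{HFcor} yields $\Delta H_Z(a,b)=1-1=0$, completing the proof.
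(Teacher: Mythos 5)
Your proposal is correct and follows essentially the same route as the paper: both rest on Corollary \ref{HFcor} together with the relations of Theorem \ref{Thmcaso1}, splitting by the position of $a+b$ relative to $m_{12}+m_{21}$; the paper compresses your third case into ``a machinery computation,'' which is exactly the telescoping of the alternating sum $B_{0,(a,b)}-B_{1,(a,b)}+B_{2,(a,b)}=1$ that you outline. Your one flagged obstacle---that the arguments of the truncations $(\,\cdot\,)_+$ are nonnegative on the supporting antidiagonals, i.e.\ that $\beta_{0,(i,j)}(Z)\ge 1$ at every bidegree $(i,j)$ with $i+j=m_{12}+m_{21}$, $i,j\ge 0$---does hold (the term $\bigl(\min\{i,j-m_{11},m_{21}-m_{11}\}+1\bigr)_+$ covers $j\ge m_{11}$ and $\varphi_{m_{12},m_{12}-m_{11}}(j)\ge 1$ covers $0\le j\le m_{11}+m_{12}-2$), so the endpoint corrections cancel as you claim.
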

\begin{proof} By Theorem \ref{Thmcaso1}, we have  $\beta_{1,(i,j)}(Z)=\beta_{0,(i-1,j)}(Z)+ \beta_{0,(i,j-1)}(Z)-1$ for $i+j=m_{12}+m_{21}+1$ and zero elsewhere, and  $\beta_{2,(i,j)}(Z)=\beta_{0,(i-1,j-1)}(Z)-1$ if and only if $i+j=m_{12}+m_{21}+2$ (otherwise zero).
	So,  by applying Corollary \ref{HFcor} and a machinery computation  we are done.
	\end{proof}
\begin{proposition}\label{HFcaso2} Let  $Z=m_{11}P_{11}+m_{12}P_{12}+m_{21}P_{21}.$ If $m_{11}> m_{21}$ then 
	$$\Delta H_Z(a,b)\begin{cases} 
	1          & \text{if}\ (a,b)< (i,j)\  \text{for some}\  (i,j)\in \cup D_i(Z)\\
	1-\beta_{0,(a,b)}(Z) & \text{if}\ (a,b)\in \cup D_i(Z)\\
	0          & \text{otherwise.} \end{cases}$$

\end{proposition}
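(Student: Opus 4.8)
The plan is to mirror the proof of Proposition \ref{HFcaso1}, adapting the bookkeeping to the four-region description of the degree-zero Betti numbers from Theorem \ref{Thmcaso2}. The engine is Corollary \ref{HFcor}, which expresses $\Delta H_Z(a,b)$ as $1-B_{0,(a,b)}+B_{1,(a,b)}-B_{2,(a,b)}$, where $B_{u,(a,b)}$ is the partial sum of the $\beta_{u,(i,j)}(Z)$ over all $(i,j)\le(a,b)$. So everything reduces to understanding how these three partial sums interact as $(a,b)$ sweeps through the plane.

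First I would record the key geometric fact that makes the three regimes of the statement clean: by Remark \ref{remD}(vi), the supports of the $D_i(Z)$ are \emph{antichains} — no two distinct elements of $\bigcup_i D_i(Z)$ are comparable in the partial order $\le$, since distinct points share neither their first nor their second coordinate. Consequently, for a fixed $(a,b)$, the set $\{(i,j)\in\bigcup_i D_i(Z):(i,j)\le(a,b)\}$ is either empty, equal to $\bigcup_i D_i(Z)$ below-and-left of $(a,b)$, or contains exactly the single antichain element at $(a,b)$ itself when $(a,b)\in\bigcup_i D_i(Z)$. I would use this to split into the three cases of the statement. Next I would feed the formulas for $\beta_1$ and $\beta_2$ from Theorem \ref{Thmcaso2} into the partial sums. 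The crucial structural observation, exactly as in the previous proposition, is that each minimal first syzygy sits one step up or one step right of a generator (its defining condition is $(a,b-1)\in D_i$ or $(a-1,b)\in D_4$), and each second syzygy sits one step up-and-right of a generator. This staircase shift is what causes massive telescoping in $B_1-B_2$ against $B_0$.

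The heart of the computation is the telescoping identity. For the "deep" region where $(a,b)$ dominates every antichair element, I would show $B_{0,(a,b)}-B_{1,(a,b)}+B_{2,(a,b)}=1$, so that $\Delta H_Z(a,b)=0$, matching the "otherwise" line. The mechanism is that the alternating sum $\sum_u(-1)^u\sum_{(i,j)}\beta_{u,(i,j)}(Z)$ equals the constant term coming from evaluating the graded Euler characteristic of the resolution, which is $1$; the shifts $(\cdot,\cdot-1)$, $(\cdot-1,\cdot)$ and $(\cdot-1,\cdot-1)$ in the $\beta_1,\beta_2$ formulas are precisely designed so that once $(a,b)$ dominates a generator it also dominates the syzygies attached to it, and the $-1$ corrections in the $\beta_1,\beta_2$ formulas cancel the extra copies. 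In the intermediate "strictly below some $(i,j)$" region the syzygy contributions beyond $(a,b)$ have not yet been summed, so only the ambient $1$ survives, giving $\Delta H_Z(a,b)=1$. On the antichain itself, $(a,b)\in\bigcup_i D_i(Z)$, the generator at $(a,b)$ has been counted in $B_0$ but its associated syzygies (which live strictly above or to the right) have not, so $\Delta H_Z(a,b)=1-\beta_{0,(a,b)}(Z)$.

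The main obstacle I anticipate is the case analysis forced by the four distinct regions $D_1,\dots,D_4$ and the special boundary phenomena flagged in Remark \ref{remD}, particularly items (iv) and (vii): the condition $(a,-B)\notin D_3(Z)$ when $B+m_{21}=1$, and the incompatibility $(a-1,b)\in D_4(Z)\Rightarrow(a,b-1)\notin D_3(Z)$. These ensure that the $\beta_1$-cases (one for $D_1$, the telescoping one for $D_2\cup D_3$, one for $D_4$) do not overlap and that no generator is double-charged with syzygies from two different regions. I would verify carefully that at each junction between two adjacent $D_i$ the staircase of syzygies meshes without gap or overlap, so that the single telescoping identity holds uniformly across the whole antichain; this boundary-matching, rather than any individual algebraic step, is where the "machinery computation" of the previous proof becomes genuinely delicate here. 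As a consistency check I would confirm the result against Example \ref{exstart1} is unavailable (it is in the $m_{11}\le m_{21}$ case), so I would instead test a small instance with $m_{11}>m_{21}$ by hand to make sure the three regimes reproduce the expected Hilbert function.
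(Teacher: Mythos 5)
Your proposal takes essentially the same route as the paper: the published proof is a one-line appeal to the argument of Proposition \ref{HFcaso1}, namely substituting the Betti-number formulas of Theorem \ref{Thmcaso2} into Corollary \ref{HFcor} and carrying out the telescoping ``machinery computation,'' which is precisely the plan you lay out (with the staircase and boundary bookkeeping made more explicit than in the paper). One small caveat inside the same approach: incomparability of the generator degrees does not follow from Remark \ref{remD}(vi) alone, since two points sharing neither coordinate can still be comparable, so the antichain property must be read off from the explicit negative-slope descriptions of the $D_i(Z)$ in Definition \ref{defD} --- but this is a detail of the same computation, not a different method.
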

\begin{proof} The proof use the same argument as in Proposition \ref{HFcaso1}.
	\end{proof}
In the last part of these notes we give a criterion to say if an admissible function, $H:\N^2 \to N,$ as introduced in \cite{GMR1992} Definition 2.2,  is the Hilbert function of a set of, at most, three (fat) points on an ACI support. 

\begin{theorem}\label{CharHF}Let $H:\N^2 \to N$ be an admissible function, and let $H(i,j)=\gamma$ for $(i,j)\gg (0,0)$. We denote by $h_{ij}:=\Delta H(i,j)$, moreover we set  $$A_i^{(d)}:=\sum_{j=0}^{d-1} h_{ij},\ \ \ B_j^{(d)}:=\sum_{i=0}^{d-1} h_{ij}$$ and  $\alpha:=\max\{i\ |\ A_i^{(d)}\neq 0\}+1,$ and $\beta:=\max\{j\ |\ B_j^{(d)}\neq 0\}+1.$ Then we have the following cases:  
	
\noindent Case $1)$ There exist $d,d_1,d_2\in \mathbb{N}$ such that if $i<d_1$ and $j<d_2$ then $h_{ij}=1$ iff  $i+j<d.$ 
If $H$ is the Hilbert function of a set of points $Z:=m_{11}P_{11}+m_{12}P_{12}+m_{21}P_{21}$ then  $(m_{11},m_{12},m_{21})$ is the solution of one of the following systems:
$$\left\{\begin{array}{r}
x+y=\alpha\\
x+z=\beta\\
y+z=d\\

\end{array}\right. \ \ \ \ \ \left\{\begin{array}{r}
x+y=\alpha\\
y=\beta\\
y+z=d.\\
\end{array}\right. \ \ \ \ \ \left\{\begin{array}{r}
z=\alpha\\
x+z=\beta\\
y+z=d\\

\end{array}\right. \ \ \ \ \ \left\{\begin{array}{r}
z=\alpha\\
y=\beta\\
{x+1\choose 2}+{y+1\choose 2}+{z+1\choose 2}=\gamma.\\

\end{array}\right.
  $$


\noindent Case $2)$ Assume the first case does not occur.  Then $H$ is not  the Hilbert function of any set of points  $m_{11}P_{11}+m_{12}P_{12}+m_{21}P_{21}.$
\end{theorem}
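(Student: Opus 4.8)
The plan is to reduce everything to the explicit description of the first difference $\Delta H_Z$ obtained in Propositions \ref{HFcaso1} and \ref{HFcaso2}, together with the elementary fact that the eventual value of $H$ equals the degree (length) of $Z$, namely $\gamma=\binom{m_{11}+1}{2}+\binom{m_{12}+1}{2}+\binom{m_{21}+1}{2}$; this last identity is just $\lim_{(a,b)\to(\infty,\infty)}H_Z(a,b)$ and supplies the quadratic equation appearing in the fourth system. First I would record, for a hypothetical $Z=m_{11}P_{11}+m_{12}P_{12}+m_{21}P_{21}$ realizing $H$, that by Propositions \ref{HFcaso1} and \ref{HFcaso2} (resting on Corollary \ref{HFcor}) the integers $h_{ij}=\Delta H(i,j)$ have the rigid form $h_{ij}=1$ on a down-closed region, $h_{ij}=1-\beta_{0,(i,j)}(Z)\le 0$ on its upper boundary, and $h_{ij}=0$ strictly above it. When $m_{11}\le m_{21}$ this boundary is the single anti-diagonal $i+j=m_{12}+m_{21}$, so the value-$1$ region is exactly the triangle $\{i+j<d\}$ with $d=m_{12}+m_{21}$, which is the configuration isolated in Case~1.

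Next I would compute $\alpha,\beta,d$ directly from this shape. The key observation is the telescoping identity $A_i^{(d)}=\sum_{j<d}h_{ij}=H(i,d-1)-H(i-1,d-1)$, which is the first horizontal difference of $H$ along the slice $b=d-1$; hence $\alpha-1=\max\{i:A_i^{(d)}\neq 0\}$ is the last horizontal jump of $H$, and symmetrically for $\beta$. Reading these last jumps off the explicit $\Delta H_Z$ gives $\alpha=m_{11}+m_{12}$ in the generic situation and $\alpha=m_{21}$ in the degenerate situation where the horizontal line through $P_{11},P_{12}$ no longer controls the slice, and dually $\beta=m_{11}+m_{21}$ or $\beta=m_{12}$. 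The four combinations of these two dichotomies produce exactly the four systems: whenever at least one of $\alpha,\beta$ is generic, the triple $(\alpha,\beta,d)$ furnishes three independent linear equations in $(m_{11},m_{12},m_{21})$; in the doubly-degenerate case $\alpha=m_{21}$, $\beta=m_{12}$ one has $d=m_{12}+m_{21}=\alpha+\beta$, so the relation carried by $d$ becomes dependent and must be replaced by the degree equation $\gamma=\sum_{ij}\binom{m_{ij}+1}{2}$, which is precisely the fourth system. Solving each system and discarding solutions that violate the defining inequalities of its regime (the dichotomy $m_{11}\le m_{21}$ versus $m_{11}>m_{21}$, and the ACM/degeneracy conditions of Remark \ref{remD}) then recovers $(m_{11},m_{12},m_{21})$.

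For the converse (Case~2) I would argue by contraposition: if $H=H_Z$ for some admissible triple, then the explicit form of $\Delta H_Z$ always exhibits a triangle $\{i+j<d\}$ on which $h=1$ and off which $h\neq 1$ inside a suitable box, so Case~1 necessarily holds. Equivalently, failure of Case~1 means the value-$1$ locus of $\Delta H$ cannot be matched by the rigid shape forced by Propositions \ref{HFcaso1}--\ref{HFcaso2}, whence no such $Z$ exists; here the staircase description of $\cup D_i(Z)$ and the monotonicity statements in Remark \ref{remD} supply the needed rigidity.

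The main obstacle I anticipate is the bookkeeping in the second step. One must prove that the two admissible expressions for $\alpha$ (and for $\beta$) are the only possibilities and determine exactly which regime each occurs in, so that the four systems are simultaneously exhaustive and mutually exclusive; this is where the combinatorics of $\cup D_i(Z)$ does the real work, and where the degenerate boundary configurations (the collapse of the diagonal part $D_2\cup D_3$, or the appearance of the steep/shallow pieces $D_1$, $D_4$) must be handled with care. In particular, the delicate point is to confirm that the value of $d$ to be fed into the definitions of $\alpha$ and $\beta$ is the triangle threshold $m_{12}+m_{21}$ realized on a small box near the origin, rather than the larger threshold suggested by the full staircase, since only this choice makes the linear systems reproduce the correct multiplicities.
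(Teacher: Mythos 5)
Your proposal is correct and takes essentially the same route as the paper: both read off the triangle shape and $d=m_{12}+m_{21}$ from Propositions \ref{HFcaso1}--\ref{HFcaso2}, identify $\alpha=\max\{m_{11}+m_{12},m_{21}\}$ and $\beta=\max\{m_{11}+m_{21},m_{12}\}$, derive the four systems from the resulting two dichotomies, replace the dependent equation in the doubly degenerate system by the degree equation $\binom{x+1}{2}+\binom{y+1}{2}+\binom{z+1}{2}=\gamma$, and settle Case 2 by contraposition. The only cosmetic difference is that you obtain $\alpha$ and $\beta$ by telescoping $A_i^{(d)}=H(i,d-1)-H(i-1,d-1)$ and reading the last jumps off the explicit $\Delta H_Z$, whereas the paper simply cites Theorem 2.12 of \cite{GMR1992} for the interpretation of $\alpha,\beta$ as maximal collinear multiplicity counts.
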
 
\begin{proof}Case (1). The condition $h_{ij}=1$ iff $i+j<d$ ($i<d_1,\ j<d_2$) is always verified for sets of three points on an ACI support (see Theorem \ref{Thmcaso1} and Theorem \ref{Thmcaso2}). In both cases we have $m_{12}+m_{21}=d.$   Moreover, from Theorem 2.12 in \cite{GMR1992}, $\alpha$ and $\beta$ respectively count the maximum number of point on a line of type $(0,1)$ and $(1,0)$  that are respectively $\max\{m_{11}+m_{21},m_{12}\}$ and $\max\{m_{11}+m_{12},m_21\}.$
These conditions give arise to four linear systems:
$$\left\{\begin{array}{r}
x+y=\alpha\\
x+z=\beta\\
y+z=d\\

\end{array}\right. \ \ \ \ \ \left\{\begin{array}{r}
x+y=\alpha\\
y=\beta\\
y+z=d.\\
\end{array}\right. \ \ \ \ \ \left\{\begin{array}{r}
z=\alpha\\
x+z=\beta\\
y+z=d\\

\end{array}\right. \ \ \ \ \ \left\{\begin{array}{r}
z=\alpha\\
y=\beta\\
y+z=d.\\
\end{array}\right.
$$
But the last system is not determined, so we need to replace one equation with ${x+1\choose 2}+{y+1\choose 2}+{z+1\choose 2}=\gamma$, that is the degree of a set of three fat points.

Moreover, from Proposition \ref{HFcaso1} and  Proposition \ref{HFcaso2} we can see that case (2) does not lead to any set of at most three fat points on an ACI support.
\end{proof}

Given an admissible numerical function $H$, Theorem \ref{CharHF} allows us to find the multiplicities of the three points candidate to have as Hilbert function $H$. Supposed to be in Case (1), we construct sets of points with multiplicity as found by solving the systems. Then, by using Theorem \ref{Thmcaso1} and Proposition \ref{HFcaso1} or \ref{Thmcaso2} and Proposition \ref{HFcaso2}, we compute the Hilbert function of these points and hence we compare them with $H.$ The next example shows this procedure. 

\begin{example}Let $H:\N^2\to \N$ be a numerical function such that
	$$\Delta H=\begin{array}{l|cccccccccc}
	& 0 & 1 & 2 & 3 & 4 & 5 & 6& 7&  \cdots \\
	\hline
	0 & 1 &  1 &  1 &  1 &  1 & 1 & 1 & 0& \cdots \\
	1 & 1 &  1 &  1 &  1 &  1 & 0 & 0 & 0& \cdots \\
	2 & 1 &  1 &  1 &  1 &  0 & 0 & 0 & 0& \cdots \\
	3 & 1 &  1 &  1 &  -1 &  0 & 0 & 0 & 0& \cdots \\
	4 & 1 &  1 &  -2 &  0 &  0 & 0 & 0 & 0& \cdots \\
	5 & 1 &  -1 &  0 &  0 &  0 & 0 & 0 & 0& \cdots \\
	6 & 0 &  0 &  0 &  0 &  0 & 0 & 0 & 0& \cdots \\ 
	
	\vdots & \vdots & \vdots & \vdots & \vdots & \vdots & \vdots & \vdots & \vdots & \ddots \\
	\end{array}$$
	
	Note that we are in Case (1) of Theorem \ref{CharHF}, in particular we have $d=6$. 
	Thus $\gamma= \sum \Delta H(i,j)=18$ $\alpha=4$ and $\beta=7.$
	So we get the following systems to solve 
	$$(i)\left\{\begin{array}{r}
	x+y=4\\
	x+z=7\\
	y+z=6\\
	\end{array}\right. \ \ \ \ \ (ii)\left\{\begin{array}{r}
	x+y=4\\
	y=7\\
	y+z=6.\\
	\end{array}\right. \ \ \ \ \ (iii)\left\{\begin{array}{r}
	z=4\\
	x+z=7\\
	y+z=6\\
	\end{array}\right. \ \ \ \ \ (iv)\left\{\begin{array}{r}
	z=4\\
	y=7\\
	x^2+x+18=0.\\
	\end{array}\right.
	$$
Note that $(i), (ii), (iv)$ have not solution in $\N^3$, i.e. $H$ is the Hilbert function of a set of fat points $Z=m_{11}P_{11}+m_{12}P_{12}+m_{21}P_{21}$ if and only if $(m_{11},m_{12},m_{21})=(3,2,4),$ that is the solution of $(iii).$ But from Proposition \ref{HFcaso2} and Theorem \ref{Thmcaso2} we have that $Z=3P_{11}+2P_{12}+4P_{21}$ has the first difference of  the Hilbert function equal to 
$$\Delta H_Z=\begin{array}{l|cccccccccc}
& 0 & 1 & 2 & 3 & 4 & 5 & 6& 7&  \cdots \\
\hline
0 & 1 &  1 &  1 &  1 &  1 & 1 & 1 & 0& \cdots \\
1 & 1 &  1 &  1 &  1 &  1 & 0 & 0 & 0& \cdots \\
2 & 1 &  1 &  1 &  1 &  0 & 0 & 0 & 0& \cdots \\
3 & 1 &  1 &  1 &  -1 &  0 & 0 & 0 & 0& \cdots \\
4 & 1 &  1 &  -1 &  0 &  0 & 0 & 0 & 0& \cdots \\
5 & 1 &  -1 &  0 &  0 &  0 & 0 & 0 & 0& \cdots \\
6 & 0 &  0 &  0 &  0 &  0 & 0 & 0 & 0& \cdots \\ 

\vdots & \vdots & \vdots & \vdots & \vdots & \vdots & \vdots & \vdots & \vdots & \ddots \\
\end{array}$$
so $H\neq H_Z$ and hence $H$ is not the Hilbert function of any set of at most three fat points on an ACI support.
\end{example}




\end{document}